\newtheorem{thm}{Theorem}[section]
\newtheorem{defn}[thm]{Definition}
\newtheorem{corollary}[thm]{Corollary}
\newtheorem{lemma}[thm]{Lemma}
\theoremstyle{remark}
\newtheorem{remark}[thm]{Remark}
\def\qed{{\hfill $\Box$ \bigskip}}
\def\XXint#1#2#3{{\setbox0=\hbox{$#1{#2#3}{\int}$}
\vcenter{\hbox{$#2#3$}}\kern-.5\wd0}}
\newcommand\cbrk{\text{$]$\kern-.15em$]$}}
\newcommand\opar{\text{\,\raise.2ex\hbox{${\scriptstyle
|}$}\kern-.34em$($}}
\newcommand\cpar{\text{$)$\kern-.34em\raise.2ex\hbox{${\scriptstyle |}$}}\,}
\def\<{\langle}
\def\>{\rangle}
\newcommand\bR{\mathbb{R}}
\newcommand\bZ{\mathbb{Z}}
\newcommand\bN{\mathbb{N}}
\newcommand\fR{\mathbf{R}}
\newcommand\cF{\mathcal{F}}
\newcommand\cG{\mathcal{G}}
\newcommand\cI{\mathcal{I}}
\newcommand\cL{\mathcal{L}}
\newcommand\cM{\mathcal{M}}
\newcommand\cT{\mathcal{T}}
\newcommand{\mysection}[1]{\section{#1}
\setcounter{equation}{0}}
\begin{document}

\title[Parabolic pseudu-differential equations in $L_p$-Lipschitz space]
{An $L_p$-Lipschitz  theory  for parabolic equations with time measurable pseudo-differential operators}

\author{Ildoo Kim}
\address{Center for Mathematical Challenges, Korea Institute for Advanced Study, 85 Hoegiro Dongdaemun-gu,
Seoul 130-722, Republic of Korea} \email{waldoo@kias.re.kr}
\thanks{The first author was supported by the TJ Park Science
         Fellowship of POSCO TJ Park Foundation}

\subjclass[2010]{35K99, 47G30, 26A16}

\keywords{Time measurable pseudo-differential operator,  $L_p$-Lipschitz  estimate, Cauchy problem}

\begin{abstract}
In this article we prove the existence and uniqueness of a (weak) solution $u$ in 
$L_p\left( (0,T) ; \Lambda_{\gamma+m}\right)$ to the Cauchy problem
\begin{align}
						\notag
&\frac{\partial u}{\partial t}(t,x)=\psi(t,i\nabla)u(t,x)+f(t,x),\quad (t,x) \in (0,T) \times \mathbf{R}^d \\
						\label{main eqn}
& u(0,x)=0,
\end{align}
where $d \in \mathbb{N}$, $p \in (1,\infty]$, $\gamma,m \in (0,\infty)$, $\Lambda_{\gamma+m}$ is the Lipschitz  space on $\mathbf{R}^d$ whose order is $\gamma+m$, $f \in L_p\left( (0,T) ; \Lambda_{\gamma} \right)$, and $\psi(t,i\nabla)$ is a time measurable pseudo-differential operator whose symbol is $\psi(t,\xi)$, i.e.
$$
\psi(t,i\nabla)u(t,x)=\cF^{-1}\left[\psi(t,\xi)\cF\left[u(t,\cdot)\right](\xi)\right](x),
$$
with the assumptions
\begin{align*}
\Re[\psi(t,\xi)] \leq  -\nu|\xi|^{\gamma},
\end{align*}
and
\begin{align*}
|D_{\xi}^{\alpha}\psi(t,\xi)|\leq\nu^{-1}|\xi|^{\gamma-|\alpha|}.
\end{align*}
Furthermore, we show 
\begin{align}
					\label{e 1028 1}
\int_0^T \|u(t,\cdot)\|^p_{\Lambda_{\gamma+m}} dt \leq N \int_0^T \|f(t,\cdot)\|^p_{\Lambda_{m}} dt,
\end{align}
where $N$ is a positive constant depending only on $d$, $p$, $\gamma$, $\nu$, $m$, and $T$,

The unique solvability of equation (\ref{main eqn}) in $L_p$-H\"older space is also considered. 
More precisely, for any $f \in L_p((0,T);C^{n+\alpha})$, there exists a unique solution $u \in L_p((0,T);C^{\gamma+n+\alpha}(\mathbf{R}^d))$ to equation (\ref{main eqn}) and for this solution $u$,
\begin{align}
						\label{e 1029 1}
\int_0^T \|u(t,\cdot)\|^p_{C^{\gamma+n+\alpha}}dt \leq N \int_0^T \|f(t,\cdot)\|^p_{C^{n+\alpha}}dt,
\end{align}
where $n \in \mathbb{Z}_+$, $\alpha \in (0,1)$, and $\gamma+\alpha \notin \mathbb{Z}_+$.
\end{abstract}

\maketitle

\mysection{Introduction}

The class of pseudo-differential operators is a very large class of differential operators including 
second-order, $2m$-order,  and generators of Markov processes.
Therefore, theories for pseduo-differential operators have been applied in many areas of science
and contain lots of mathematical interesting properties. 
For classical and modern theories to pseudo-differential operators, we refer books \cite{hormander2007analysis, Stein1993, Krylov2008, jacob2002pseudo, abels2012pseudodifferential}.

Pseudo-differential operators are treated mostly in elliptic setting
and commonly independent of $t$ or regular with respect to $t$ even though there are a few results in parabolic setting.

Recently, the author with collaborators  studied pseudo-differential operators which have no regularity with respect $t$.
In \cite{Kim2014BMOpseudo, kim2016q}
we obtained  BMO estimates and $L_q((0,\infty); L_p)$ estimates for the singular integral operator 
$$
\cT f(t,x):= \int_0^t \int_{\fR^d} K(t,s,x-y)f(s,y)dyds,
$$
where
\begin{align*}
K(s,t,x) := \cF^{-1} \left[ |\xi|^\gamma \exp\left( \int_s^t \psi(r,\xi)dr \right) \right]
\end{align*}
and the symbol $\psi(t,\xi)$ satisfies (\ref{sym1}) and (\ref{sym2}).

In this artilce, we study the well-posedness of Cauchy problem (\ref{main eqn}) in the $L_p$-Lipschitz space $L_p\left( (0,T) ; \Lambda_{\gamma+m}\right)$ (see Definition \ref{d 1029 1} for the Lipschitz space $\Lambda_{\gamma + m}$).
Especially, we obtain the optimal regularity estimate (\ref{e 1028 1}) when the given datum $f$ is in $L_p\left( (0,T) ; \Lambda_{m}\right)$. To the best of our knowledge,  this article is the first result which handles the unique solvability of equations with pseudo-differential operators in the Lipschitz  space, although there are a few results related to the boundedness of certain elliptic pseudo-differential operators in the Lipschitz space  (see \cite[Chapter VI]{Stein1993} and \cite{lin2010pseudo}).


There is a close relation between the Lipschitz space and the classical H\"older space.
If $\gamma \in (0,1)$, then $\Lambda_{\gamma}=C^\gamma$.
In general, Lipschitz space $\Lambda_{\gamma}$ is a bigger class than the classical H\"older space $C^{\gamma}$. 
Hence it is needed to remark related resluts in $L_p$-H\"older space.   If the operator is second-order, then there exists an $L_p$-H\"older theory. 
In \cite{Krylov2002}, Krylov obtained the unique solvability to the  parabolic second-order equation
$$
u_t(t,x)= a^{ij}(t)u_{x^ix^j}(t,x) - \lambda u(t,x) \qquad (t,x) \in \fR^{d+1}
$$
in $L_p(\fR ; C^{2+\alpha})$-space, where $p \in (1,\infty]$ and $\alpha\in (0,1)$.
Here $a^{ij}(t)$ are merely measurable  and satisfy an ellipticity condition, i.e. there exists a constant $\delta >0$ so that
$$
\delta \leq a^{ij}(t) \xi^{i} \xi^j \leq \delta^{-1} \qquad \forall (t,\xi) \in \fR^{d+1}.
$$ 
Except Krylov's work, we could not find any other result studying parabolic equations in $L_p$-H\"older space with all $p \in (1,\infty]$.
However, if we restrict $p = \infty$, many results can be found.
We refer the reader to \cite{lorenzi2000optimal,dong2011partial,dong2015partial}(second-order equations) and  
\cite{Mikulevivcius1992,mikulevicius2014cauchy}(integro-differential equations).

The novelty of our result is that we handle parabolic equations with arbitrary  positive order operator and the unique solvability is considered in the space $L_p\left( (0,T) ; \Lambda_{\gamma+m}\right)$ which is rougher than $L_p\left( (0,T) ; C^{\gamma+m}\right)$ with all $\gamma, m >0$.
We emphasize that our estimates hold for all $p \in (1,\infty]$.
In this sense, even the $L_p$-H\"older estimate (\ref{e 1029 1}) given by an application of (\ref{e 1028 1}) is new since most previous results are proved only when $\gamma \in (0,2]$, $p=\infty$, and $m \in (0,1)$.

Another innovation of this paper is the method we use.  
Maximum principles play an important role in the proofs of \cite{lorenzi2000optimal,dong2011partial,dong2015partial} 
and the proof of \cite{Krylov2002} highly depends on explicit form of  the heat kernel and uppper bounds of its derivatives.
For integro-differential operators, 
the methods obtaining estimates are connected with a probability theory.
For instance, probability tools such as It\^o's formula and the ingetral representation of generators of L\'evy processes are used in the proofs of \cite{Mikulevivcius1992,mikulevicius2014cauchy}.
However we do not have such  rich information for pseudo-differential operators
and thus the method we use in this article is different from previous one. 
We adopt Littlewood-Paley operators which are recognized as one of most powerful tools in modern Fourier analysis. 
Since most computations are related to the Fourier transforms of kernels instead of kernels themselves,
calculation becomes much simpler even though estimates in this paper are stronger than previous results.

This article is organized as follows. We introduce our main results in Section 2.
In section 3, we prove required kernel estimates related to pseudo-differential operators.
In section 4, an $L_\infty \left( (0,T) ; \Lambda_{\gamma+m}\right)$-estimate is obtained.
Finally, proofs of main theorems are given in Section 5.

We finish the introduction with  notation used in the article.
\begin{itemize}
\item $\bN$ and $\bZ$ denote the natural number system
and the integer number system, respectively.
$\bZ_+$ is the subset of $\bZ$ whose elements are nonnegative, i.e. $\bZ_+ :=\{ k \in \bZ ; k  \geq 0\}$. 
As usual $\fR^{d}$
stands for the Euclidean space of points $x=(x^{1},...,x^{d})$.
 For $i=1,...,d$, multi-index $\alpha=(\alpha_{1},...,\alpha_{d})$,
$\alpha_{i}\in\{0,1,2,...\}$, and a function $u(x)$ we set
\begin{align*}
u_{x^{i}}=\frac{\partial u}{\partial x^{i}}=D_{i}u,\quad
D^{\alpha}u=D_{1}^{\alpha_{1}}\cdot...\cdot D^{\alpha_{d}}_{d}u,
\quad  \nabla u=(u_{x^1}, u_{x^2}, \cdots, u_{x^d}).
\end{align*}
Sometimes we use $D^\alpha_x$ to denote the variable to which differentiation is taken. 
$C(\fR^d)$ denotes the space of bounded continuous functions on $\fR^d$.
For $n \in \bN$, we write $u \in C^n(\fR^d)$  if $u$ is $n$-times continuously differentiable in $\fR^d$ and 
the supremum of all derivatives up to $n$ is bounded, i.e. $\sup_{x \in \bR^d, |\alpha|\leq n} |D^\alpha u(x)|< \infty$. 
Simply we put $C^n := C^n(\fR^d)$.

\item For $p \in [1,\infty)$, a normed space $F$,
and a  measure space $(X,\mathcal{M},\mu)$, 
$$
L_{p}(X,\cM,\mu;F)
$$
denotes the space of all $F$-valued $\mathcal{M}^{\mu}$-measurable functions
$u$ so that
\[
\left\Vert u\right\Vert _{L_{p}(X,\cM,\mu;F)}:=\left(\int_{X}\left\Vert u(x)\right\Vert _{F}^{p}\mu(dx)\right)^{1/p}<\infty,
\]
where $\mathcal{M}^{\mu}$ denotes the completion of $\cM$ with respect to the measure $\mu$.

For $p=\infty$, we write $u \in L_{\infty}(X,\cM,\mu;F)$ iff
\begin{align*}
\sup_{x}|u(x)| := \|u\|_{L_{\infty}(X,\cM,\mu;F)}
:= \inf\left\{ \nu \geq 0 : \mu( \{ x: \|u(x)\|_F > \nu\})=0\right\} <\infty.
\end{align*}
If there is no confusion for the given measure and $\sigma$-algebra, we usually omit the measure and $\sigma$-algebra.
In particular, we denote $L_p = L_p(\fR^d,\cL, \ell ;\fR)$,
where $\cL$ is the Lebesgue measurable sets,  and $\ell$ is the Lebesgue measure.

\item We use the notation $N$ to denote a generic constant which may change from line to line. 
If we write $N=N(a,b,\cdots)$, this means that the
constant $N$ depends only on $a,b,\cdots$.

\item We use  ``$:=$" or ``$=:$" to denote a definition. 
For $a,b \in \fR$, $a \wedge b := \min \{ a,b\}$, $a \vee b := \max \{a , b\}$, and
$\lfloor a \rfloor$ is the biggest integer which is less than or equal to $a$.
For a  set $A$, we use $1_A(x)$ to denote  the indicator of $A$, i.e. $1_A(x) = 1$ if $x \in A$ and $1_A(x) =0$ if $x \notin A$. 
For a Lebesgue measurable set $B$, $|B|$ is the Lebesgue measure of $B$. 
For a complex number $z$, $\Re[z]$ is the real part of $z$
and $\bar z$ is the complex conjugate of $z$.

\item By $\cF$ and $\cF^{-1}$ we denote the d-dimensional Fourier transform and the inverse Fourier transform, respectively. That is,
$\cF[f](\xi) := \int_{\fR^{d}} e^{-i x \cdot \xi} f(x) dx$ and $\cF^{-1}[f](x) := \frac{1}{(2\pi)^d}\int_{\fR^{d}} e^{ i\xi \cdot x} f(\xi) d\xi$.
\end{itemize}

\mysection{Main results}
Recall the assumptions on the symbol $\psi(t,\xi)$.
Let $\gamma \in (0,\infty)$ and $\psi(t,\xi)$ be a measurable function on $ [0,T] \times \fR^{d}$ satisfying
\begin{align}
					\label{sym1}
\Re[\psi(t,\xi)] \leq  -\nu|\xi|^{\gamma},\quad \quad \forall \, (t,\xi) \in [0,T]\times \fR^d
\end{align}
and
\begin{align}
						\label{sym2}
|D_{\xi}^{\alpha}\psi(t,\xi)|\leq\nu^{-1}|\xi|^{\gamma-|\alpha|},\quad \quad \forall \, (t,\xi)\in [0,T]\times (\mathbf{R}^{d}\setminus\{0\}), \,\, |\alpha|\leq \left\lfloor\frac{d}{2}\right\rfloor+1,
\end{align}
where $\nu$ is a positive constant. 
The $\left\lfloor\frac{d}{2}\right\rfloor+1$ differentiability on the symbol has been known as an optimal differentiability  $($cf. Mihlin's condition  and H\"ormander's condition \cite[Theorem 5.2.7]{grafakos2008classical}$)$.
For notational convenience, we put 
$$
d_0:=\left\lfloor\frac{d}{2}\right\rfloor+1.
$$
Define a pseudo-differential operator $\psi(t, i\nabla)$ on $C_c^\infty(\fR^d)$ as
\begin{align*}
\psi(t, i\nabla)\phi(x):=\cF^{-1}\left[ \psi(t,\xi) \cF(\phi)(\xi)\right](x)
\end{align*}
and its adjoint operator $\psi^\ast (t, i\nabla)$ as
\begin{align*}
 \psi^\ast (t, i\nabla)\phi(x)
:= \overline{\cF^{-1}\left[ \bar \psi(t,\xi) \cF[\phi](\xi)\right](x)}
= \cF^{-1} \left[ \psi(t,-\xi) \cF[\phi](\xi) \right](x),
\end{align*}
where $\phi \in C_c^\infty(\fR^d)$ and $\bar \psi(t,\xi)$ denotes the complex conjugate of $\psi(t,\xi)$. 
Observe that
\begin{align}
							\label{ad fr}
\cF[ \psi^\ast (t, i\nabla)\phi(x)](\xi)
= \psi(t,-\xi)\cF(\phi)(\xi).
\end{align}
\begin{remark}
If $\psi(t,\xi)$ is defined on a interval $[0,T] \times \fR^d$, then there exists a trivial extension to $\fR^{d+1}$ by putting
$$
\psi(t,\xi) := \psi(0,\xi) \quad t \in (-\infty, 0)
$$ 
and 
$$
\psi(t,\xi) := \psi(T,\xi) \quad t \in (T,\infty).
$$
Therefore we may assume that the symbol $\psi(t,\xi)$ is defined on $\fR^{d+1}$. 
\end{remark}

We adopt the definition of a solution to equation (\ref{main eqn}) in the weak sense as usual.
\begin{defn}[Definition of a solution $u$]
					\label{sol}
Let $T \in (0,\infty)$. We say that a locally integrable function $u$ on $(0,T) \times \fR^d$ is a (weak) solution to equation (\ref{main eqn}) iff
for any $\phi \in C^\infty_c\left((0,T) \times \fR^d\right)$
\begin{align*}
\int_0^T \int_{\fR^d} u(t,x) \left( -\phi_t(t,x) - \psi^\ast (t,i\nabla)\phi(t,x)\right)dtdx
=\int_0^T \int_{\fR^d} f(t,x) \phi(t,x) dtdx
\end{align*}
\end{defn}

We introduce function spaces needed to handle solvability of equation (\ref{main eqn}) in $L_p$-Lipschitz space.
\begin{defn}
For $f \in C(\fR^d)$ and $x,h \in \fR^d$, define the difference operator as
\begin{align*}
D_h(f)(x) := f(x+h)-f(x).
\end{align*}
Inductively, for any $n \in \bN \setminus \{1\}$, we define
\begin{align*}
D_h^n(f)(x) = D_h(D^{n-1}_hf)(x).
\end{align*}
\end{defn}
						\label{d 1029 1}
\begin{defn}[Lipschitz  space]
For $m \in (0,\infty)$ and $f \in C(\fR^d)$, we define 
\begin{align}
							\label{ho lip norm}
\|f\|_{\dot \Lambda_m}:= \sup_{x \in \fR^d}\sup_{h \in \fR^d \setminus \{0\}} \frac{\left|D_h^{\lfloor m\rfloor+1}(f)(x)\right|}{|h|^m}
\end{align}
and
\begin{align}
							\label{lip norm}
\|f\|_{\Lambda_m}:=\|f\|_{L_\infty} + \sup_{x \in \fR^d}\sup_{h \in \fR^d \setminus \{0\}} \frac{\left|D_h^{\lfloor m\rfloor+1}(f)(x)\right|}{|h|^m}.
\end{align}
The space of continuous functions $f$ with $\|f\|_{\dot \Lambda_m}< \infty$ is called homogeneous Lipschitz  space  whose order is $m$,
which is denoted by $\dot  \Lambda_m$.
Similarly, $\Lambda_m$ denotes the spaces of continuous functions $f$ with $\|f\|_{\Lambda_m}< \infty$ and is called (inhomogeneous) Lipschitz space..
\end{defn}
\begin{defn}[$L_p$-Lipschitz space]
For $T \in (0,\infty)$, $p \in (1,\infty]$, $m \in (0,\infty)$, and a measurable function $f(t,x)$ on $(0,T)\times \fR^d$, we denote
\begin{align}
						\label{lp lip norm}
\|f\|_{L_p\left((0,T) ; \Lambda_m\right)}
:= \left(\int_0^T \|f(t,\cdot)\|^p_{\Lambda_m} dt\right)^{1/p}.
\end{align}
We say that $f \in L_p((0,T);\Lambda_m)$ iff
$\|f\|_{L_p\left((0,T) ; \Lambda_m\right)} <\infty$.
\end{defn}
\begin{remark}
\begin{enumerate}[(i)]

\item Since $\|\cdot\|_{\dot \Lambda_m}$ does not recognize polynomials of degree up to order $\lfloor m \rfloor$ and thus it is not a norm.
By identifying two continuous functions whose difference is a polynomial of degree up to order $\lfloor m \rfloor$,
we can regard $\|\cdot\|_{\dot \Lambda_m}$ as a norm.

\item $\Lambda_m$ and $L_p\left((0,T) ; \Lambda_m\right)$ are Banach spaces.

\item From the definition of $\Lambda_m$, one can easily check that for any $ f \in L_p((0,T);\Lambda_m)$ and $0<t \leq T$,
\begin{align}
						\label{e 1029 2}
\left\|\int_0^t f(s,\cdot) ds\right\|_{\Lambda_m}
\leq \int_0^t \| f(s,\cdot) \|_{\Lambda_m} ds.
\end{align}
Moreover, if we consider  the integral $\int_0^t f(s,\cdot) ds$ as Bochner's integral, then 
(\ref{e 1029 2}) is one of simple properties of the Bochner integral. 
\end{enumerate}
\end{remark}
Here is the main result of this paper.
\begin{thm}
					\label{main thm}
Let  $m,T \in (0,\infty)$ and $p \in (1,\infty]$. Then for any $f \in L_p((0,T);\Lambda_m)$, 
there exists a unique solution $u \in L_p((0,T);\Lambda_{\gamma+m})$ to equation (\ref{main eqn}).
Furthermore, for this solution $u$, 
\begin{align}
						\label{main est}
\int_0^T \|u(t,\cdot)\|^p_{\Lambda_{\gamma+m}} dt \leq N \int_0^T \|f(t,\cdot)\|^p_{\Lambda_{m}}dt,
\end{align}
where $N$ depends only on $d$, $p$, $\gamma$, $\nu$, $m$, and $T$.
\end{thm}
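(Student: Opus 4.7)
I would construct the candidate solution by the variation-of-parameters formula
$$
u(t,x) = \int_0^t \bigl[T_{s,t} f(s,\cdot)\bigr](x)\,ds,\qquad \cF[T_{s,t}g](\xi):=P_{s,t}(\xi)\cF[g](\xi),\quad P_{s,t}(\xi):=\exp\!\Bigl(\int_s^t \psi(r,\xi)\,dr\Bigr).
$$
By \eqref{sym1} one has $|P_{s,t}(\xi)|\le e^{-\nu(t-s)|\xi|^\gamma}$, so $T_{s,t}$ commutes with every Littlewood--Paley block $\Delta_j$ and the low-frequency cut-off $S_0$, and a short Fourier-side calculation shows that $u$ is a weak solution in the sense of Definition~\ref{sol}. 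The argument then rests on the standard equivalence
$$
\|g\|_{\Lambda_m}\asymp \|S_0 g\|_{L_\infty}+\sup_{j\ge 1} 2^{jm}\|\Delta_j g\|_{L_\infty}
$$
together with the dyadic block estimate
$$
\|\Delta_j T_{s,t} g\|_{L_\infty} \;\le\; N\, e^{-c(t-s)2^{j\gamma}}\,\|\Delta_j g\|_{L_\infty},\qquad j\ge 0,
$$
which I expect to be the content of Sections~3--4. Here \eqref{sym1} produces the exponential decay of $|P_{s,t}|$ on $|\xi|\sim 2^j$, while the Mihlin/H\"ormander hypothesis \eqref{sym2} (with $d_0$ derivatives) gives an $L_1$-bound on the kernel of the compactly supported multiplier that is uniform in $j$.

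Since $\Delta_j$ commutes with $T_{s,t}$, the block estimate yields, pointwise in $t$ and $j$,
$$
2^{j(\gamma+m)}\|\Delta_j u(t,\cdot)\|_{L_\infty}
\;\le\; N\int_0^t K_j(t-s)\,\|f(s,\cdot)\|_{\Lambda_m}\,ds,\qquad K_j(u):=2^{j\gamma}e^{-cu\,2^{j\gamma}}1_{u>0}.
$$
Each $K_j$ is nonnegative and decreasing on $(0,\infty)$ with $\|K_j\|_{L_1}=c^{-1}$ independent of $j$, so a standard approximate-identity argument dominates the convolution on the right (uniformly in $j$) by the one-dimensional Hardy--Littlewood maximal function $\cM\bigl(1_{(0,T)}\|f(\cdot,\cdot)\|_{\Lambda_m}\bigr)(t)$. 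Taking the supremum over $j$, raising to the $p$-th power, integrating in $t$, and invoking the $L_p$-boundedness of $\cM$ for $p\in(1,\infty]$ produces the homogeneous part of \eqref{main est}. The low-frequency piece $\|S_0 u\|_{L_\infty}$ is handled by a direct H\"older inequality in time, using that $T_{s,t}$ is uniformly $L_\infty$-bounded on $|\xi|\lesssim 1$; this is where the $T$-dependence of $N$ enters.

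For uniqueness I would argue by duality: if $u\in L_p((0,T);\Lambda_{\gamma+m})$ is a weak solution with $f\equiv 0$, I would test against solutions of the adjoint backward Cauchy problem driven by $\psi^{\ast}(t,i\nabla)$, whose symbol $\psi(t,-\xi)$ still satisfies \eqref{sym1}--\eqref{sym2}; smoothing arbitrary terminal data by the same dyadic machinery then forces $u\equiv 0$. I expect the most delicate step to be the transition from the pointwise-in-$j$ block bound to an $L_p$-in-time estimate after the supremum in $j$: a block-by-block Young inequality only yields $\sup_j\|\cdot\|_{L_p}$, which is strictly weaker than the desired $\|\sup_j\cdot\|_{L_p}$, and it is exactly the pointwise maximal-function domination that closes this gap and allows the full range $p\in(1,\infty]$ to be treated uniformly---precisely where the Littlewood--Paley viewpoint pays off over the explicit-kernel or maximum-principle techniques used in the earlier parabolic H\"older literature.
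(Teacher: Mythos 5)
Your construction of the solution, the verification of the weak formulation, the Littlewood--Paley block estimate $\|\Delta_j T_{s,t}g\|_{L_\infty}\le Ne^{-c(t-s)2^{j\gamma}}\|\Delta_j g\|_{L_\infty}$, and the duality argument for uniqueness all coincide with the paper's proof (the block estimate is exactly Lemma \ref{l 1017 1} combined with \eqref{e 1017 2}, and the $L_\infty$-in-time bound is Theorem \ref{t 1018 1}). Where you genuinely diverge is the passage from the $L_\infty$-in-time estimate to the $L_p$-in-time estimate for $1<p<\infty$. The paper proves a weak type $(1,1)$ bound (Lemma \ref{l w11}) by running a Calder\'on--Zygmund decomposition of $t\mapsto\|f(t,\cdot)\|_{\dot\Lambda_m}$ in the time variable, which requires the additional kernel regularity estimate $\|(p(s,t,\cdot)-p(t_0,t,\cdot))\ast f\|_{\dot\Lambda_{\gamma+m}}\le N|s-t_0|(t-(s\vee t_0))^{-2}\|f\|_{\dot\Lambda_m}$ of Lemma \ref{l 1017 3}(ii) (hence the $p_{1,\gamma}$ bounds of Corollary \ref{cor 1014 1}), and then interpolates via Marcinkiewicz. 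You instead observe that on each block the time kernel $K_j(u)=2^{j\gamma}e^{-cu2^{j\gamma}}1_{u>0}$ is a dilate of a fixed kernel with integrable decreasing radial majorant, so the convolution is dominated pointwise, uniformly in $j$, by $N\cM(1_{(0,T)}\|f(\cdot,\cdot)\|_{\Lambda_m})(t)$; taking $\sup_j$ and using $L_p$-boundedness of $\cM$ gives \eqref{main est} for all $p\in(1,\infty]$ at once. This is correct and shorter: it exploits the genuine exponential (rather than merely integrable) decay supplied by \eqref{sym1}, whereas the paper's CZ/Marcinkiewicz route is the more robust singular-integral argument and is what one would have to fall back on if only an $L_1$-in-time bound without a pointwise decreasing majorant were available. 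Two caveats: you still need the low-frequency and $L_\infty$ pieces separately (you note this, and \eqref{l infty bounded} supplies it), and your uniqueness sketch leaves unproved the solvability/density statement for the adjoint terminal-value problem --- the paper closes this by citing its earlier $L_p(L_p)$ theory from \cite{kim2016q}, and some such input is genuinely needed there rather than ``the same dyadic machinery.''
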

The proof of this theorem will be given in Section {\ref{pf main thm}.

\vspace{2mm}

There is a close relation between the Lipschitz space $\Lambda_{n+\alpha}$ and the classical H\"older space the $C^{n+\alpha}$.
We recall the definition of the classical H\"older spaces and introduce a comparison between the Lipschitz space and the H\"older space briefly. 
\begin{defn}[H\"older space]
For $n \in \bZ_+$, $\alpha \in (0,1)$, and $f \in C^n$, we define 
\begin{align}
							\label{lip norm}
\|f\|_{C^{n+\alpha}}:=\|f\|_{L_\infty} + \sum_{|\beta|=n} \sup_{x \in \fR^d}\sup_{h \in \fR^d \setminus \{0\}} \frac{\left|D_h(D^{\beta}(f))(x)\right|}{|h|^\alpha}.
\end{align}
The space of continuous functions $f$ such that $\|f\|_{C^{n+\alpha}}< \infty$ is called H\"older space whose order is $n+\alpha$. 
\end{defn}

\begin{defn}[$L_p$-H\"older space]
For $T \in (0,\infty)$, $p \in (1,\infty]$, $n \in \bZ_+$, $\alpha \in (0,1)$, and a measurable function $f(t,x)$ on $(0,T)\times \fR^d$, we denote
\begin{align}
						\label{lp hol norm}
\|f\|_{L_p\left((0,T) ; C^{n+\alpha}\right)}
:= \left(\int_0^T \|f(t,\cdot)\|^p_{C^{n+\alpha}} dt\right)^{1/p}.
\end{align}
We say that $f \in L_p((0,T);C^{n+\alpha})$ iff
$\|f\|_{L_p\left((0,T) ; C^{n+\alpha}\right)} <\infty$.
\end{defn}

\begin{remark}
From the definitions of the Lipschitz space and the H\"older space, one can easily check the following two properties:
\begin{enumerate}[(i)]
\item For any $\alpha \in (0,\infty)$, $C^{\alpha} \subset \Lambda_{\alpha}$ and the inclusion is continuous i.e. there exists a constant $N$ so that
$$
\|f\|_{\Lambda_\alpha} \leq N \|f\|_{C^\alpha} \qquad \forall f \in C^\alpha.
$$
\item $C^{\alpha} = \Lambda_{\alpha}$ if $\alpha \in (0,1)$.
\end{enumerate}
\end{remark}

\begin{thm}
					\label{main thm 2}
Let $T \in (0,\infty)$, $p \in (1,\infty]$, $n \in \bZ_+$, and $\alpha \in (0,1)$ so that $\gamma+\alpha \notin \bZ_+$. Then for any $f \in L_p((0,T);C^{n+\alpha})$, 
there exists a unique solution $u \in L_p((0,T);C^{\gamma+n+\alpha})$ to equation (\ref{main eqn}).
Furthermore, for this solution $u$, 
\begin{align}
						\label{main est 2}
\int_0^T \|u(t,\cdot)\|^p_{C^{\gamma+n+\alpha}} dt
\leq N \int_0^T \|f(t,\cdot)\|^p_{C^{n+\alpha}}dt,
\end{align}
where $N$ depends only on $d$, $p$, $\gamma$, $\nu$,  $\alpha$, and $T$.
\end{thm}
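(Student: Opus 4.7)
The plan is to deduce Theorem \ref{main thm 2} directly from Theorem \ref{main thm} by identifying the classical Hölder spaces $C^{s}$ with the Lipschitz–Zygmund spaces $\Lambda_{s}$ whenever the order $s$ is not a nonnegative integer. Since the condition $\gamma+\alpha \notin \bZ_+$ together with $n\in\bZ_+$ ensures that both $n+\alpha$ and $\gamma+n+\alpha$ are non-integer (the former is automatic because $\alpha\in(0,1)$), both the datum and the expected solution live in ranges where the Zygmund and classical scales agree.

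First I would establish the embedding/identification lemma that for every non-integer $s>0$,
\[
\Lambda_{s} = C^{s} \quad \text{with equivalent norms,}
\]
in the sense that $\|f\|_{\Lambda_s}\sim \|f\|_{C^s}$. The inclusion $C^{s}\hookrightarrow \Lambda_{s}$ with $\|f\|_{\Lambda_s}\leq N\|f\|_{C^s}$ is already recorded in the remark following the definition of $C^{n+\alpha}$, and holds for all $s$. The reverse inequality for non-integer $s=k+\beta$, $k\in\bZ_+$, $\beta\in(0,1)$, is the classical Zygmund/Lipschitz identification: iterating $D_h^{\lfloor s\rfloor+1}$ bounds first yield boundedness of $D^{\alpha}f$ for $|\alpha|\leq k$, after which the $\lfloor s\rfloor+1$-fold difference characterization is reduced to a single first difference of the top-order derivative, at which point one uses $\Lambda_\beta = C^\beta$ when $\beta\in(0,1)$ (the second property in the remark). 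This argument is carried out in \cite[Chapter VI]{Stein1993}; I would cite it rather than reprove it.

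With this lemma in hand the proof of Theorem \ref{main thm 2} is short. Given $f\in L_p((0,T);C^{n+\alpha})$, the embedding $C^{n+\alpha}\hookrightarrow\Lambda_{n+\alpha}$ applied pointwise in $t$ yields $f\in L_p((0,T);\Lambda_{n+\alpha})$ with
\[
\|f\|_{L_p((0,T);\Lambda_{n+\alpha})} \leq N\,\|f\|_{L_p((0,T);C^{n+\alpha})}.
\]
Applying Theorem \ref{main thm} with $m=n+\alpha$ produces a unique weak solution $u\in L_p((0,T);\Lambda_{\gamma+n+\alpha})$ of \eqref{main eqn} satisfying
\[
\int_0^T \|u(t,\cdot)\|_{\Lambda_{\gamma+n+\alpha}}^p\,dt \leq N\int_0^T \|f(t,\cdot)\|_{\Lambda_{n+\alpha}}^p\,dt.
\]
Since $\gamma+n+\alpha\notin\bZ_+$, the identification lemma gives $\Lambda_{\gamma+n+\alpha}=C^{\gamma+n+\alpha}$ with equivalent norms, so $u\in L_p((0,T);C^{\gamma+n+\alpha})$ and chaining the equivalences produces \eqref{main est 2}. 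For uniqueness, any two weak solutions in $L_p((0,T);C^{\gamma+n+\alpha})$ lie in $L_p((0,T);\Lambda_{\gamma+n+\alpha})$ by the trivial embedding, and Theorem \ref{main thm} forces them to coincide.

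The only substantive obstacle is the non-integer identification $\Lambda_s=C^s$, and in particular the forward bound $\|f\|_{C^s}\leq N\|f\|_{\Lambda_s}$ when $s>1$; the subtlety is that $\|\cdot\|_{\Lambda_s}$ controls only one iterated difference of $f$ itself rather than differences of $D^{\lfloor s\rfloor}f$, so the reduction to derivatives requires an inductive passage through fractional orders and uses in an essential way that $s$ avoids integers (otherwise only the Zygmund condition and not the Hölder condition holds on the top derivative). Once that standard Stein-type lemma is invoked, everything else in the proof is bookkeeping.
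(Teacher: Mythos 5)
Your argument is correct, and it reaches the same destination as the paper but by a slightly different decomposition. You apply Theorem \ref{main thm} once at order $m=n+\alpha$ and then invoke the full non-integer identification $\Lambda_s=C^s$ (the hard direction $\Lambda_s\hookrightarrow C^s$ for $s=\gamma+n+\alpha\notin\bZ_+$) as a cited black box. The paper instead splits $\|u\|_{C^{\gamma+n+\alpha}}$ into two pieces: the $L_\infty$ part is handled directly by the Young-type bound \eqref{e 1023 2} for $\cG$ on $L_p((0,T);C^{n+\alpha})$, and the top-order H\"older seminorm is obtained by commuting $\cG$ with $D^\beta$ for $|\beta|=n$, applying Theorem \ref{main thm} at order $m=\alpha$ (where $\Lambda_\alpha=C^\alpha$ is immediate), and then converting the resulting $\Lambda_{\gamma+\alpha}$ control on $D^\beta u$ into H\"older control on $D^{\beta+\alpha_1}u$, $|\alpha_1|=\lfloor\gamma+\alpha\rfloor$, via the derivative-reduction statement \cite[Corollary 6.3.10]{grafakos2009modern}. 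The two routes lean on essentially the same classical fact about Zygmund versus H\"older scales at non-integer order, but yours uses it in its full ``norm equivalence'' form while the paper only needs the derivative-reduction corollary at order $\gamma+\alpha$ together with the elementary embedding $C^\alpha=\Lambda_\alpha$; your version is shorter and arguably cleaner, at the price of importing a slightly stronger external lemma, and you are right to flag that the reverse inequality $\|f\|_{C^s}\leq N\|f\|_{\Lambda_s}$ for non-integer $s>1$ is the only nontrivial ingredient and must be cited or proved.
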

The proof of this theorem will be given in Section {\ref{pf main thm}.

\mysection{Preliminaries}

We introduce kernels related to the symbol $\psi(t,\xi)$ satisfying (\ref{sym1}) and (\ref{sym2}).
For $s<t$, $a, b \in \fR$, and a multi-index $\alpha$, denote
\begin{align*}
 p_{\alpha,a,b}(s,t,x):= \cF^{-1}\left[ D^\alpha_\xi\left((\psi(t,\xi))^a|\xi|^b\exp\left( \int_s^t \psi(r,\xi)dr \right)\right) \right](x),
\end{align*}
\begin{align*}
\hat q_{\alpha,a,b}(s,t,\xi)
:=  D^\alpha_\xi\left[ \left((t-s)\psi(t,(t-s)^{-1/\gamma}\xi)\right)^a|\xi|^b\exp\left( \int_s^t \psi(r,(t-s)^{-1/\gamma}\xi)dr \right)\right],
\end{align*}
and
\begin{align*}
 q_{\alpha,a,b}(s,t,x):= \cF^{-1}\left[\hat q_{\alpha,a,b}(s,t,\cdot) \right](x).
\end{align*}
In particular, we set
\begin{align*}
p_{a,b}(s,t,x):= p_{0,a,b}(s,t,x), ~\hat q_{a,b}(s,t,\xi):= \hat q_{0,a,b}(s,t,\xi), ~q_{a,b}(s,t,x):= q_{0,a,b}(s,t,x),
\end{align*}
and
$$
p(s,t,x):=p_{0,0,0}(s,t,x).
$$
\begin{remark}
If $a \geq 0$ and $b \geq 0$, then obviously for each $s<t$
$$
(\psi(t,\cdot))^a|\cdot|^b \exp\left( \int_s^t \psi(r,\cdot)dr \right) \in L_1(\fR^d) \cap L_2(\fR^d)
$$
due to (\ref{sym1}) and (\ref{sym2}).  Therefore 
$$
p_{a,b}(s,t,\cdot) \in L_2(\fR^d) \cap L_\infty(\fR^d) 
$$
and $p_{a,b}(s,t,\cdot) \in L_p(\fR^d)$ for all $p \geq 2$.
However, we do not know $p_{a,b}(s,t,\cdot) \in L_p(\fR^d)$ for $p  \in [1,2)$ yet.
\end{remark}

\begin{lemma}
Let $a \in \bZ_+$, $b \in \fR$, and $\alpha$ be a  multi-index so that $|\alpha | \leq d_0 $.
Then there exists a constant $N(d,a,b,\gamma,\nu )$ such that for all $s<t$ and  $\xi \neq 0$,
\begin{align}
								\notag
\left| \hat q_{\alpha,a,b}(s,t,\xi) \right| 
&\leq   N 1_{a=b=0,\alpha \neq 0} \cdot |\xi|^{\gamma - |\alpha|}e^{-\nu|\xi|^\gamma} \\ 
								\label{928 1}
&\quad + N\left(1_{a\neq 0}+1_{b\neq 0}+1_{\alpha = 0} \right)\cdot |\xi|^{\gamma a + b - |\alpha|}e^{-\nu|\xi|^\gamma}.
\end{align}
In particular, 
if either 
(i) $a=b=0$, $\alpha \neq 0$ or
(ii) $\gamma a + b - |\alpha| > -d$ holds then
\begin{align}
						\label{0806 eqn 1}
\sup_{s<t}\int_{\fR^d} \left|\hat q_{\alpha,a,b}(s,t,\xi) \right| d\xi\leq N(d,a,b,\gamma,\nu).
\end{align}
\end{lemma}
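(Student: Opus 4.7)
The strategy is to rescale so that the symbol becomes ``order zero'', apply Faà di Bruno to the exponential, and combine via Leibniz.

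First I would introduce $\tau:=t-s$ and rewrite
$$\hat q_{\alpha,a,b}(s,t,\xi)=D^\alpha_\xi\bigl(G(\xi)^a|\xi|^b e^{H(\xi)}\bigr),$$
where $G(\xi):=\tau\psi(t,\tau^{-1/\gamma}\xi)$ and $H(\xi):=\int_s^t\psi(r,\tau^{-1/\gamma}\xi)\,dr$. The scaling cancellation $\tau\cdot\tau^{-|\beta|/\gamma}\cdot(\tau^{-1/\gamma})^{\gamma-|\beta|}=1$ combined with (\ref{sym2}) gives the scale-free bound $|D^\beta_\xi G(\xi)|\leq \nu^{-1}|\xi|^{\gamma-|\beta|}$ for $|\beta|\leq d_0$, and the same bound for $|D^\beta_\xi H(\xi)|$ when $|\beta|\geq 1$. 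Assumption (\ref{sym1}) gives $\Re H(\xi)\leq -\nu|\xi|^\gamma$, so $|e^{H(\xi)}|\leq e^{-\nu|\xi|^\gamma}$.

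Next I would treat the exponential factor. By Faà di Bruno, for $|\alpha|\geq 1$,
$$D^\alpha_\xi e^{H(\xi)}=e^{H(\xi)}\sum_{\pi}c_\pi\prod_{B\in\pi}D^{\alpha_B}_\xi H(\xi),$$
where $\pi$ ranges over partitions of $\alpha$ into nonempty blocks. A $k$-block term is bounded by $N|\xi|^{k\gamma-|\alpha|}e^{-\nu|\xi|^\gamma}$. For $k\geq 2$ the surplus $|\xi|^{(k-1)\gamma}$ is absorbed by writing $|\xi|^{(k-1)\gamma}e^{-(\nu/2)|\xi|^\gamma}\leq N_k$. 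This yields $|D^\alpha_\xi e^{H(\xi)}|\leq N|\xi|^{\gamma-|\alpha|}e^{-\nu|\xi|^\gamma}$ (after renaming a smaller constant as $\nu$, which only changes $N$).

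Then I would apply Leibniz to the full product:
$$\hat q_{\alpha,a,b}(s,t,\xi)=\sum_{\beta\leq\alpha}\binom{\alpha}{\beta}D^\beta_\xi\bigl(G^a|\xi|^b\bigr)\cdot D^{\alpha-\beta}_\xi e^{H(\xi)}.$$
When $a\geq 1$, another Faà di Bruno/Leibniz application produces $|D^\beta_\xi(G^a|\xi|^b)|\leq N|\xi|^{a\gamma+b-|\beta|}$; multiplying by the exponential-derivative bound and absorbing the surplus $|\xi|^{j\gamma}$ factors as above dominates every term by $N|\xi|^{a\gamma+b-|\alpha|}e^{-\nu|\xi|^\gamma}$, yielding the second summand of (\ref{928 1}). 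In the distinguished case $a=b=0$, $\alpha\neq 0$, the prefactor $G^a|\xi|^b\equiv 1$, so only $\beta=0$ survives and the bound reduces directly to $|D^\alpha_\xi e^{H(\xi)}|\leq N|\xi|^{\gamma-|\alpha|}e^{-\nu|\xi|^\gamma}$, the sharper first summand (rather than the naive $|\xi|^{-|\alpha|}$ that a blind Leibniz expansion would suggest).

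Finally, (\ref{0806 eqn 1}) follows by splitting the $\xi$-integral at $|\xi|=1$. Near infinity the exponential decay dominates any polynomial. Near the origin one needs the exponent of $|\xi|$ to exceed $-d$: in case (i) it equals $\gamma-|\alpha|$, which is $>-d$ automatically since $|\alpha|\leq d_0\leq d$ and $\gamma>0$; in case (ii) it is exactly the hypothesis $\gamma a+b-|\alpha|>-d$. The main obstacle is the Faà di Bruno bookkeeping—specifically, isolating the exceptional case $a=b=0$, $\alpha\neq 0$ so that the sharper power $|\xi|^{\gamma-|\alpha|}$ is extracted instead of the crude $|\xi|^{-|\alpha|}$, since this sharper exponent is precisely what makes the case $|\alpha|=d_0$ integrable near the origin.
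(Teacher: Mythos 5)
Your proof is correct and is exactly the computation the paper leaves implicit (its entire proof is ``an easy consequence of (\ref{sym1}) and (\ref{sym2})''): the scale-invariance of the bounds for $G$ and $H$, the Fa\`a di Bruno/Leibniz expansion isolating the single-block term in the case $a=b=0$, $\alpha\neq 0$, and the split of the integral at $|\xi|=1$ are all as intended. You are also right, and more careful than the paper, that the multi-block terms force one to trade part of the exponential decay (replacing $\nu$ by a smaller constant in $e^{-\nu|\xi|^{\gamma}}$) to absorb the surplus powers $|\xi|^{(k-1)\gamma}$ at infinity; this is harmless for every subsequent use of the estimate.
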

\begin{proof}
This is an easy consequence of (\ref{sym1}) and (\ref{sym2}).
\end{proof}

\begin{corollary}
Let $a \in \bZ_+$, $b \in \fR$, and $\alpha$ be a  multi-index so that $|\alpha | \leq d_0 $.
Assume that either (i) $a=b=0$ and $\alpha \neq 0$ or
(ii) $\gamma a + b - |\alpha| > -d$ holds.
Then there exists a constant $N(d,a,b,\gamma,\nu)$ such that
\begin{align}
						\label{eq 1013 1}
\sup_{s<t,x \in \fR^d} |q_{\alpha,a,b}(s,t,x)| \leq N. 
\end{align}
\end{corollary}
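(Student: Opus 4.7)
The statement is essentially an immediate consequence of the preceding lemma together with the standard bound $\|\mathcal{F}^{-1}[g]\|_{L_\infty} \leq (2\pi)^{-d}\|g\|_{L_1}$. My plan is therefore to reduce the claim to the integrability estimate \eqref{0806 eqn 1} in one line.

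First I would fix $s<t$ and $x \in \mathbf{R}^d$, and write
\begin{align*}
q_{\alpha,a,b}(s,t,x) = \frac{1}{(2\pi)^d} \int_{\mathbf{R}^d} e^{i\xi \cdot x}\, \hat q_{\alpha,a,b}(s,t,\xi)\, d\xi,
\end{align*}
which is legitimate provided $\hat q_{\alpha,a,b}(s,t,\cdot) \in L_1(\mathbf{R}^d)$. Precisely this $L_1$-integrability (uniformly in $s<t$) is given by \eqref{0806 eqn 1} of the preceding lemma, since we are assuming either (i) $a=b=0$ with $\alpha\neq 0$, or (ii) $\gamma a + b - |\alpha| > -d$, which are the two alternatives under which that lemma supplies the uniform $L_1$ bound.

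Taking the absolute value inside the integral and applying \eqref{0806 eqn 1} then gives
\begin{align*}
|q_{\alpha,a,b}(s,t,x)| \leq \frac{1}{(2\pi)^d} \int_{\mathbf{R}^d} \left|\hat q_{\alpha,a,b}(s,t,\xi)\right| d\xi \leq N(d,a,b,\gamma,\nu),
\end{align*}
uniformly in $s<t$ and $x\in\mathbf{R}^d$, which is exactly \eqref{eq 1013 1}.

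There is no genuine obstacle here: the corollary is really just the observation that the hypotheses of the lemma have been arranged so that $\hat q_{\alpha,a,b}(s,t,\cdot)$ is uniformly in $L_1$, and boundedness of the inverse Fourier transform then follows automatically. The only minor point worth checking is that the $L_1$-bound on $\hat q_{\alpha,a,b}(s,t,\cdot)$ does justify defining $q_{\alpha,a,b}(s,t,\cdot)$ pointwise by the inverse Fourier integral rather than merely as a tempered distribution; this is immediate from $L_1 \subset \mathcal{S}'$ and the Riemann--Lebesgue lemma, so the pointwise value coincides with the distributional definition.
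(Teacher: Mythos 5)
Your proof is correct and is essentially identical to the paper's: both simply bound $\sup_{x}|q_{\alpha,a,b}(s,t,x)|$ by $\|\hat q_{\alpha,a,b}(s,t,\cdot)\|_{L_1}$ via the elementary $L_\infty$--$L_1$ bound for the inverse Fourier transform and then invoke \eqref{0806 eqn 1}. Your additional remark about the pointwise versus distributional definition is a harmless refinement the paper leaves implicit.
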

\begin{proof}
This is an easy consequence of a property of the Fourier inverse transform and (\ref{0806 eqn 1}).
Indeed,
$$
\sup_{s<t,x \in \fR^d} |q_{\alpha,a,b}(s,t,x)| 
\leq \sup_{s<t} \| \hat q_{\alpha,a,b}(s,t,\cdot) \|_{L_1(\fR^d)} \leq N.
$$
The corollary is proved. 
\end{proof}
\begin{lemma}
Let $a \in \bZ_+$, $b \in \fR$, and $\alpha$ be a multi-index so that $|\alpha | \leq d_0$.
Then there exists a constant $N$ such that for all $\varepsilon \in (0,1)$, $s<t$, and $\xi \neq 0$,
\begin{align}
							\notag
&\int_{|\xi| \geq \varepsilon}  \left|  \hat q_{\alpha,a,b}(s,t,\xi) \right|^2 d\xi \\
							\notag
&\leq N 1_{a=b=0,\alpha \neq 0} \cdot \left(1+ \varepsilon^{2(\gamma-|\alpha|) +d} + 1_{2(\gamma-|\alpha|)=-d} \cdot \ln \varepsilon^{-1} \right) \\
							\label{819 1}
&\quad +N \left(1_{a\neq 0}+1_{b\neq 0}+1_{\alpha = 0} \right) \left(1+ \varepsilon^{2(\gamma a+b-|\alpha|) +d} + 1_{2(\gamma a+b-|\alpha|)=-d} \cdot \ln \varepsilon^{-1} \right),
\end{align}
where $N=N(d,a,b,\gamma,\nu)$. 
In particular, if either (i) $a=b=0$, $\alpha \neq 0$, $2(\gamma-|\alpha|)> -d$ or 
(ii) $2(\gamma a + b - |\alpha|)>-d$ holds, then
\begin{align}
						\label{p eq}
\sup_{s<t}\int_{\fR^d}\left| \hat q_{\alpha,a,b}(s,t,\xi)\right|^2 d\xi
\leq N(d,a,b,\gamma,\nu).
\end{align}
\end{lemma}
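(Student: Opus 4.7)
The plan is to reduce both estimates \eqref{819 1} and \eqref{p eq} to a one–variable radial integral, using the pointwise upper bound \eqref{928 1} from the preceding lemma as the only input.

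First, I would square \eqref{928 1}. Since $(x+y)^2 \leq 2(x^2+y^2)$, and the two indicator prefactors on the right of \eqref{928 1} remain of the same form, squaring produces a bound of exactly the same structure as \eqref{928 1} but with exponents $2(\gamma-|\alpha|)$ or $2(\gamma a + b - |\alpha|)$ in the polynomial and $e^{-2\nu|\xi|^\gamma}$ in the exponential. Thus, after writing $\beta$ for whichever of these two exponents is relevant (depending on which of the cases (i) or (ii) applies), the integral to estimate is of the form
\begin{equation*}
I(\varepsilon):=\int_{|\xi|\geq\varepsilon} |\xi|^{2\beta} e^{-2\nu|\xi|^\gamma}\,d\xi.
\end{equation*}

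Passing to polar coordinates gives $I(\varepsilon)=\omega_{d-1}\int_\varepsilon^\infty r^{2\beta+d-1}e^{-2\nu r^\gamma}\,dr$. I would split this as $\int_\varepsilon^1 + \int_1^\infty$. The tail $\int_1^\infty$ is dominated by the exponential and gives a constant $N(d,\gamma,\nu)$ independent of everything else. The head $\int_\varepsilon^1 r^{2\beta+d-1}\,dr$ splits into three regimes according to the sign of $2\beta+d$: if $2\beta+d>0$ it is at most $\tfrac{1}{2\beta+d}$; if $2\beta+d<0$ it equals $\tfrac{\varepsilon^{2\beta+d}-1}{2\beta+d}\leq N\varepsilon^{2\beta+d}$ for $\varepsilon\in(0,1)$; and if $2\beta+d=0$ it equals $\ln\varepsilon^{-1}$. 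All three cases are simultaneously absorbed into the bound $N\bigl(1+\varepsilon^{2\beta+d}+1_{2\beta+d=0}\ln\varepsilon^{-1}\bigr)$. Substituting back $\beta=\gamma-|\alpha|$ or $\beta=\gamma a+b-|\alpha|$ and reattaching the two indicator prefactors from \eqref{928 1} yields exactly \eqref{819 1}.

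For the ``in particular'' statement \eqref{p eq}, I would simply send $\varepsilon\to 0^+$ in \eqref{819 1}. Under the hypothesis $2(\gamma-|\alpha|)>-d$ in case (i) or $2(\gamma a+b-|\alpha|)>-d$ in case (ii), we have $2\beta+d>0$, so $\varepsilon^{2\beta+d}\to 0$ and the logarithmic term is absent; monotone convergence then gives $\int_{\fR^d}|\hat q_{\alpha,a,b}(s,t,\xi)|^2\,d\xi \leq N(d,a,b,\gamma,\nu)$, uniformly in $s<t$.

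There is no real obstacle here: the entire argument is a direct application of the already-proven pointwise bound \eqref{928 1} followed by a one-dimensional radial integration. The only mild care needed is bookkeeping of the three subcases in the sign of $2\beta+d$ so that the single compact expression on the right-hand side of \eqref{819 1} correctly captures all of them, and keeping the two indicator prefactors from \eqref{928 1} intact after squaring.
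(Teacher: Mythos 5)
Your proposal is correct and follows essentially the same route as the paper: both square the pointwise bound \eqref{928 1}, split the integral into $|\xi|\geq 1$ (controlled by the exponential decay) and $\varepsilon\leq|\xi|\leq 1$ (a radial power integral with the three-way case analysis on the sign of $2\beta+d$), and then deduce \eqref{p eq} by letting $\varepsilon\to 0$ under the positivity hypothesis.
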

\begin{proof}
By (\ref{928 1}),
\begin{align*}
&\int_{|\xi| \geq \varepsilon} \left|\hat q_{\alpha,a,b}(s,t,\xi)\right|^2 d\xi \\
&\leq \int_{|\xi| \geq 1} \left|\hat q_{\alpha,a,b}(s,t,\xi)\right|^2 d\xi
+\int_{\varepsilon \leq |\xi| \leq 1} \left|\hat q_{\alpha,a,b}(s,t,\xi)\right|^2 d\xi \\
&\leq N  \\
&\quad +N\int_{\varepsilon \leq |\xi| \leq 1} \Big( 1_{a=b=0,\alpha \neq 0} \cdot |\xi|^{2(\gamma - |\alpha|)}  
 + \left(1_{a\neq 0}+1_{b\neq 0}+1_{\alpha = 0} \right)\cdot |\xi|^{2(\gamma a + b - |\alpha|)}\Big) d\xi\\
&\leq N\left(1+ 1_{a=b=0,\alpha \neq 0} \cdot \varepsilon^{2(\gamma-|\alpha|) +d} + 1_{2(\gamma-|\alpha|) =-d} \cdot \ln \varepsilon^{-1} \right) \\
&\quad +N\left(1_{a\neq 0}+1_{b\neq 0}+1_{\alpha = 0} \right)\cdot\left(1+ \varepsilon^{2(\gamma a + b-|\alpha|)+d} + 1_{2(\gamma a +b-|\alpha|)=-d} \cdot \ln \varepsilon^{-1} \right).
\end{align*}
The lemma is proved. 
\end{proof}

\begin{lemma}
						\label{lem 1013 1}
Let $a \in \bZ_+$ and $b \geq 0$ be constants such that 
either (i) $a=b=0$ or (ii) $\gamma a + b > 0$ holds.
Assume 
\begin{align}
						\label{e 1101 1}
\begin{cases}
&0<\delta < \frac{1}{2} \wedge \left(\gamma a +b\right) \quad \text{if}~ \gamma a +b >0 \\
&0<\delta < \frac{1}{2} \wedge \gamma \quad \text{if}~a= b =0 .
\end{cases}
\end{align}
Then there exists a constant $N(d,a,b,\gamma,\nu, \delta)$ such that
\begin{align}
						\label{eq 1013 2}
\sup_{s<t} \int_{\fR^d} \left| |x|^{\frac{d}{2}+\delta} q_{a,b}(s,t,x)\right|^2dx \leq N.
\end{align}
\end{lemma}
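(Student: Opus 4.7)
My plan is to apply Plancherel to rewrite the weighted $L^2$ norm of $q_{a,b}$ as a Gagliardo--Slobodeckij seminorm of a derivative of $\hat q_{a,b}$, and then bound this seminorm using the pointwise estimates from (\ref{928 1}).

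Set $k := \lfloor d/2\rfloor$ and $\beta := d/2 + \delta - k$, so $\beta \in (0,1)$: specifically $\beta = \delta$ when $d$ is even and $\beta = 1/2 + \delta$ when $d$ is odd; the constraint $\delta < 1/2$ in (\ref{e 1101 1}) ensures $\beta < 1$. The multinomial expansion $|x|^{2k} = \sum_{|\alpha|=k} c_\alpha x^{2\alpha}$ yields
\[
\int_{\fR^d} |x|^{d+2\delta}\,|q_{a,b}(s,t,x)|^2\, dx = \sum_{|\alpha|=k} c_\alpha \int_{\fR^d} |x|^{2\beta}\, |x^\alpha q_{a,b}(s,t,x)|^2\, dx.
\]
The classical fractional Plancherel identity---obtained by combining $|x|^{2\beta} = c_{d,\beta}\int_{\fR^d} |e^{ix\cdot\zeta}-1|^2 |\zeta|^{-d-2\beta}\, d\zeta$ with Parseval's theorem---states that, for $g \in L^2(\fR^d)$,
\[
\int_{\fR^d} |x|^{2\beta}\,|g(x)|^2\, dx = C_{d,\beta} \int_{\fR^d}\int_{\fR^d} \frac{|\hat g(\xi)-\hat g(\eta)|^2}{|\xi-\eta|^{d+2\beta}}\, d\xi\, d\eta.
\]
Taking $g = x^\alpha q_{a,b}$ (so $\hat g = i^{|\alpha|}D^\alpha_\xi \hat q_{a,b}$) reduces the lemma to a uniform-in-$(s,t)$ bound on the Gagliardo seminorm $[D^\alpha \hat q_{a,b}(s,t,\cdot)]^2_{H^\beta}$ for each $|\alpha| = k$.

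I would bound this Gagliardo seminorm by symmetrizing to $|\xi| \leq |\eta|$ and splitting into the outer region $|\xi-\eta| \geq |\eta|/2$ and the inner region $|\xi-\eta| < |\eta|/2$. In the outer region, the crude bound $|D^\alpha \hat q(\xi) - D^\alpha \hat q(\eta)|^2 \leq 2(|D^\alpha \hat q(\xi)|^2 + |D^\alpha \hat q(\eta)|^2)$, together with $\int_{|\xi-\eta|\geq|\eta|/2}|\xi-\eta|^{-(d+2\beta)}\, d\xi = c|\eta|^{-2\beta}$ and the pointwise bound (\ref{928 1}), reduces matters to $\int_{\fR^d}|\eta|^{2(\gamma a + b - k - \beta)} e^{-c|\eta|^\gamma}\, d\eta$. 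In the inner region, $|\xi|\sim|\eta|$ and the segment from $\xi$ to $\eta$ stays at distance $> |\eta|/2$ from the origin; the mean-value theorem, combined with the pointwise bound on $\nabla D^\alpha \hat q$ (furnished by (\ref{928 1}) since $|\alpha|+1 = k+1 \leq d_0$) and integration in $\xi - \eta$, yields the same integrand. Both contributions are finite at the origin precisely when $\gamma a + b > k + \beta - d/2 = \delta$, which is the hypothesis $\delta < \gamma a + b$ in case (ii) of (\ref{e 1101 1}); the case (i) $a = b = 0$ is handled identically with $\gamma a + b$ replaced by $\gamma$, so that the convergence condition becomes $\delta < \gamma$, again matching the hypothesis. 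The main obstacle is exactly this singular behaviour of $D^\alpha \hat q_{a,b}$ at $\xi = 0$ when $\gamma a + b < |\alpha|$: the condition $\delta < \gamma a + b$ (resp.\ $\delta < \gamma$) from (\ref{e 1101 1}) is the sharp threshold balancing this singularity against the Gagliardo weight $|\xi-\eta|^{-d-2\beta}$.
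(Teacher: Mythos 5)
Your overall strategy is the same as the paper's: both proofs trade the weight $|x|^{d/2+\delta}$ for fractional smoothness of order $\beta=\delta_0=\tfrac d2-\lfloor\tfrac d2\rfloor+\delta\in(0,1)$ of the $\lfloor d/2\rfloor$-th derivative of $\hat q_{a,b}$, and both then exploit the pointwise bounds (\ref{928 1}) up to order $d_0=\lfloor d/2\rfloor+1$; the paper realizes the fractional norm through the singular-integral representation of $(-\Delta)^{\delta_0/2}$ with a three-way split in the difference variable $y$ (crudely only for $|y|\ge 1$), while you use the equivalent Gagliardo double integral with a split at $|\xi-\eta|=|\eta|/2$. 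For $d\ge 2$, and for $d=1$ in case (ii), your estimates check out: the inner region correctly uses $|\alpha|+1=d_0$ derivatives, and the convergence threshold $\gamma a+b>\delta$ (resp.\ $\gamma>\delta$) matches (\ref{e 1101 1}).

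There is, however, a genuine gap in your outer region when $d=1$ and $a=b=0$. Then $k=\lfloor d/2\rfloor=0$, so $\alpha=0$ and you are estimating the $H^{1/2+\delta}$ Gagliardo seminorm of $\hat q_{0,0}$ itself. Your claim that case (i) is ``handled identically with $\gamma a+b$ replaced by $\gamma$'' implicitly invokes the first alternative of (\ref{928 1}), which is only available for $\alpha\neq 0$; for $a=b=0$ and $\alpha=0$ the bound (\ref{928 1}) gives merely $|\hat q_{0,0}(s,t,\eta)|\le Ne^{-\nu|\eta|^\gamma}$, with no vanishing at the origin. Consequently the crude bound $|\hat q(\xi)-\hat q(\eta)|^2\le 2(|\hat q(\xi)|^2+|\hat q(\eta)|^2)$ in the region $|\xi-\eta|\ge|\eta|/2$ reduces you to $\int_{|\eta|\le 1}|\eta|^{-2\beta}\,d\eta=\int_{|\eta|\le 1}|\eta|^{-1-2\delta}\,d\eta=\infty$, so the step as written fails. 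The repair is easy but must be made explicit: either restrict the crude triangle-inequality bound to $|\xi-\eta|\ge 1$ (as the paper does in $\cI_1$, where the kernel $|\xi-\eta|^{-d-2\beta}$ is integrable without any help from the symbol) and treat $|\eta|/2\le|\xi-\eta|<1$ by the mean-value theorem as in the inner region; or replace $\hat q_{0,0}$ by $\hat q_{0,0}-1$, which does not change the Gagliardo seminorm and satisfies $|\hat q_{0,0}(s,t,\eta)-1|\le N\min(|\eta|^\gamma,1)$ by (\ref{sym2}), restoring the exponent $\gamma$ you need. With that correction the argument is complete.
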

\begin{proof}
Set $$
\delta_0:= \frac{d}{2} +1 -d_0 + \delta=\frac{d}{2}-\left\lfloor\frac{d}{2}\right\rfloor+\delta \in (0,1)
$$
and recall the definition of the fractional Laplacian operator 
$$
(-\Delta)^{\delta_0/2}f(x) = \cF^{-1}\left[ |\xi|^{\delta_0} \cF[f](\xi) \right] (x).
$$
Then by properties of the Fourier inverse transform and Plancherel's theorem, 
\begin{align*}
&\sup_{s<t} \int_{\fR^d} \left| |x|^{\frac{d}{2}+\delta} q_{a,b}(s,t,x)\right|^2dx  \\
&\leq N\sum_{j=1}^d \sup_{s<t} \int_{\fR^d} \left| |x|^{\frac{d}{2}-\left\lfloor\frac{d}{2}\right\rfloor+\delta} (ix^j)^{\left\lfloor\frac{d}{2}\right\rfloor}q_{a,b}(s,t,x)\right|^2dx \\ 
&= N\sum_{j=1}^d \sup_{s<t} \int_{\fR^d} \left| |x|^{\frac{d}{2}-\left\lfloor\frac{d}{2}\right\rfloor+\delta} \cF^{-1}\left[\hat q_{\left\lfloor\frac{d}{2}\right\rfloor e_j,a,b}(s,t,\cdot)\right](x)\right|^2dx \\ 
&= N\sum_{j=1}^d \sup_{s<t} \int_{\fR^d} \left| (-\Delta)^{\delta_0/2}\left[\hat q_{\left\lfloor\frac{d}{2}\right\rfloor e_j,a,b}(s,t,\cdot)\right](x)\right|^2dx,
\end{align*}
where $e_j$ $(j=1,\ldots,d)$ is the standard orthonormal basis on $\fR^d$, i.e. $e_j$ is the vector in $\fR^d$ whose $j$-th coordinate is 1 and the others are zero. 
It is well-known that the fractional Laplacian operator has the integral representation 
\begin{align*}
&(-\Delta)^{\delta_0/2}\left[\hat q_{\left\lfloor\frac{d}{2}\right\rfloor e_j,a,b}(s,t,\cdot)\right](x) \\
&=N  \int_{\fR^d} \frac{\hat q_{\left\lfloor\frac{d}{2}\right\rfloor 
e_j,b}(s,t,x+y)-\hat q_{\left\lfloor\frac{d}{2}\right\rfloor e_j,a,b}(s,t,x)}{|y|^{d+\delta_0}} dy \\
&= N\left(\cI_1(s,t,x) + \cI_2(s,t,x) +\cI_3(s,t,x)\right),
\end{align*}
where
\begin{align*}
\cI_1(s,t,x)
=\int_{|y| \geq 1} \frac{\hat q_{\left\lfloor\frac{d}{2}\right\rfloor e_j,a,b}(s,t,x+y)-\hat q_{\left\lfloor\frac{d}{2}\right\rfloor e_j,a,b}(s,t,x)}{|y|^{d+\delta_0}} dy,
\end{align*}
\begin{align*}
\cI_2(s,t,x)
=\int_{|x|/2 \leq |y| <1} \frac{\hat q_{\left\lfloor\frac{d}{2}\right\rfloor e_j,a,b}(s,t,x+y)-\hat q_{\left\lfloor\frac{d}{2}\right\rfloor e_j,a,b}(s,t,x)}{|y|^{d+\delta_0}} dy,
\end{align*}
and
\begin{align*}
\cI_3(s,t,x)
=\int_{|y| < 1, |y| < |x|/2} \frac{\hat q_{\left\lfloor\frac{d}{2}\right\rfloor e_j,a,b}(s,t,x+y)-\hat q_{\left\lfloor\frac{d}{2}\right\rfloor e_j,a,b}(s,t,x)}{|y|^{d+\delta_0}} dy.
\end{align*}
First we estimate $\cI_1$.
By  Minkowski's inequality and (\ref{p eq}),
\begin{align*}
\sup_{s<t}\|\cI_1(s,t,\cdot)\|^2_{L_2}
\leq 2\sup_{s<t}\left\|\hat q_{\left\lfloor\frac{d}{2}\right\rfloor e_j,a,b}(s,t,\cdot)\right\|^2_{L_2} 
\left(\int_{|y| \geq 1} |y|^{-d-\delta_0} dy \right)^2
\leq N.
\end{align*}
Next we estimate $\cI_2$. 
By the the fundamental theorem of calculus and (\ref{928 1}),
\begin{align*}
&\cI_2(s,t,x) \\
&\leq \sum_{|\alpha| = d_0}\int_{|x|/2 \leq |y| < 1} |y| ^{1-d-\delta_0} \int_0^1 \left| \hat q_{\alpha,a,b}(s,t,x + \theta y)  \right| d\theta dy   \\
&\leq N
\int_{ |y| <1} 1_{|x| \leq 2|y|} |y| ^{1-d-\delta_0}   \times \\
&\qquad \qquad \qquad \int_0^1 \left(1_{a=b=0} \cdot |x+\theta y|^{\gamma-d_0}
+\left(1_{a\neq 0}+ 1_{b \neq 0}\right) \cdot| x+ \theta y|^{\gamma a + b-d_0}   \right) d\theta dy.
\end{align*}
Noe that $\cI_2=0$ if $|x| >2$.
Therefore by Minkowski's inequality,
\begin{align*}
&\|\cI_2(s,t,\cdot)\|_{L_2(\fR^d)} \\
&\leq 
N \int_{ |y| <1} |y|^{\frac{d}{2}}
\left(1_{a=b=0} \cdot |y|^{\gamma-d_0-d-\delta_0+1}+\left(1_{a\neq 0} +1_{b \neq 0}\right) \cdot|y|^{\gamma a + b-d_0-d-\delta_0+1}   \right) dy   \\
&\leq N,
\end{align*}
where (\ref{e 1101 1}) is used in the last inequality.
It only remains to estimate $\cI_3$. 
By the fundamental theorem of calculus, Minkowski's inequality, (\ref{819 1}), and (\ref{e 1101 1}),
\begin{align*}
&\|\cI_3(s,t,\cdot)\|^2_{L_2(\fR^d)} \\
&\leq \left\|\sum_{|\alpha|=d_0}\int_{|y| < 1, |y| < |\cdot|/2} |y| \int_0^1 \left|\frac{\hat q_{\alpha ,a,b}(s,t,\cdot+ \theta y)}{|y|^{d+\delta_0}} \right|d\theta dy \right\|^2_{L_2(\fR^d)} , \\
&\leq N\left|\int_{|y| < 1,} |y|^{1-d-\delta_0} 
\left( \int_{|x| \geq |y|} \left|\hat q_{d_0 e_j,a,b}(s,t,x) \right|^2 dx \right)^{1/2} dy \right|^2 \\
&\leq N\left|\int_{|y| < 1,} |y|^{1-d-\delta_0} \cdot
\left(1_{a=b=0} \cdot |y|^{\gamma- d_0 +\frac{d}{2}} + \left(1_{a\neq0} +1_{b\neq 0}\right)\cdot |y|^{\gamma a + b -d_0 +\frac{d}{2}} \right)  dy \right|^2  \\
& \leq N.
\end{align*}
Therefore, the lemma is proved. 
\end{proof}

\begin{corollary}
						\label{cor 1014 1}
Let $a \in \bZ_+$ and $b \in \fR$ such that 
$a=b=0$ or 
$$
\gamma a + b  > 0.
$$
Then there exists a constant $N(d,a,b,\gamma,\nu)$ such that for all $s<t$, 
\begin{align*}
 \int_{\fR^d}\left|p_{a,b}(s,t,x)\right|dx \leq N (t-s)^{-a-b/\gamma}.
\end{align*}
In particular,
\begin{align*}
\sup_{s<t} \int_{\fR^d}\left|p(s,t,x)\right|dx \leq N,
\end{align*}
\begin{align*}
\sup_{s<t} \int_{\fR^d}\left|p_{0,\gamma}(s,t,x)\right|dx \leq N(t-s)^{-1},
\end{align*}
and
\begin{align*}
\sup_{s<t} \int_{\fR^d}\left|p_{1,\gamma}(s,t,x)\right|dx \leq N(t-s)^{-2}.
\end{align*}
\end{corollary}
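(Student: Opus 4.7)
The plan is to reduce everything to a uniform $L_1$-bound on $q_{a,b}$ by a parabolic scaling argument, and then to obtain that $L_1$-bound from the Plancherel-type estimates of the preceding lemmas via a Cauchy--Schwarz trick with a polynomial weight.

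First I would perform the change of variable $\xi = (t-s)^{-1/\gamma}\eta$ in the Fourier integral defining $p_{a,b}$. Reading the definitions of $p_{a,b}$ and $\hat q_{a,b}$, the symbol obeys $\psi(t,\xi)^a|\xi|^b\exp(\int_s^t\psi(r,\xi)dr) = (t-s)^{-a-b/\gamma}\hat q_{a,b}(s,t,\eta)$, and combining with the Jacobian $d\xi=(t-s)^{-d/\gamma}d\eta$ and a subsequent change of variable $y=(t-s)^{-1/\gamma}x$ in the space integral, one gets
\begin{equation*}
\int_{\fR^d}|p_{a,b}(s,t,x)|\,dx = (t-s)^{-a-b/\gamma}\int_{\fR^d}|q_{a,b}(s,t,y)|\,dy.
\end{equation*}
Hence it suffices to prove $\sup_{s<t}\|q_{a,b}(s,t,\cdot)\|_{L_1(\fR^d)}\le N(d,a,b,\gamma,\nu)$.

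For the $L_1$ bound, I would fix $\delta\in(0,1/2)$ small enough that the assumption (\ref{e 1101 1}) of Lemma \ref{lem 1013 1} holds, and use Cauchy--Schwarz with the radial weight $(1+|y|^{d/2+\delta})^{-1}$:
\begin{equation*}
\int_{\fR^d}|q_{a,b}(s,t,y)|\,dy \le \left(\int_{\fR^d}\frac{dy}{(1+|y|^{d/2+\delta})^2}\right)^{1/2}\left(\int_{\fR^d}(1+|y|^{d/2+\delta})^2|q_{a,b}(s,t,y)|^2dy\right)^{1/2}.
\end{equation*}
The first factor is a finite constant because $2(d/2+\delta)>d$. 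The second factor is controlled by $\|q_{a,b}(s,t,\cdot)\|_{L_2}+\bigl\||\cdot|^{d/2+\delta}q_{a,b}(s,t,\cdot)\bigr\|_{L_2}$. The unweighted $L_2$-norm is bounded uniformly in $s<t$ by Plancherel's theorem together with (\ref{p eq}) (the hypothesis $2(\gamma a+b)>-d$ there is satisfied in both cases: it is $0>-d$ when $a=b=0$ and holds a fortiori when $\gamma a+b>0$), while the weighted $L_2$-norm is precisely the quantity estimated in (\ref{eq 1013 2}) of Lemma \ref{lem 1013 1}. Putting these together yields the required uniform bound on $\|q_{a,b}\|_{L_1}$.

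The three explicit consequences then follow by taking $(a,b)=(0,0),(0,\gamma),(1,\gamma)$, which give the scaling factors $1$, $(t-s)^{-1}$, and $(t-s)^{-2}$ respectively. No real obstacle is expected; the only delicate point is bookkeeping the two hypothesis branches ($a=b=0$ versus $\gamma a+b>0$) so that the indicator structure in (\ref{928 1}), (\ref{819 1}), and (\ref{eq 1013 2}) gives a finite bound in every case, which is arranged automatically by choosing $\delta$ small.
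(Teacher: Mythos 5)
Your proposal is correct and follows essentially the same route as the paper: reduce to a uniform $L_1$-bound on $q_{a,b}$ via the scaling identity $\int|p_{a,b}|\,dx=(t-s)^{-a-b/\gamma}\int|q_{a,b}|\,dx$, then apply Cauchy--Schwarz with the weight $|x|^{d/2+\delta}$ supplied by Lemma \ref{lem 1013 1}. The only cosmetic difference is that the paper handles the complementary (bounded) region with the pointwise bound (\ref{eq 1013 1}) on $\{|x|<1\}$ rather than your unweighted $L_2$-bound via Plancherel and (\ref{p eq}); both are valid and rest on the same preceding lemmas.
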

\begin{proof}
Set
\begin{align}
\delta=
\begin{cases}
& \frac{1}{4} \wedge \frac{\gamma}{2} \quad \text{if}~a=b=0\\
& \frac{1}{4} \wedge \frac{\gamma a + b}{2} \quad \text{if}~\gamma a + b > 0.
\end{cases}
\end{align}
By H\"older's inequality, (\ref{eq 1013 1}), and (\ref{eq 1013 2}), 
\begin{align*}
\sup_{s<t} \int_{\fR^d}\left|q_{a,b}(s,t,x)\right|dx 
&\leq  N\left(1+\sup_{s<t} \int_{|x| \geq 1} \left|q_{a,b}(s,t,x)\right|dx\right) \\
&\leq  N\left[1+ \left(\sup_{s<t} \int_{|x| \geq 1} \left| |x|^{\frac{d}{2}+\delta} q_{a,b}(s,t,x)\right|^2dx\right)^{1/2} \right]
\leq N.
\end{align*}
Therefore 
\begin{align*}
 \int_{\fR^d}\left|p_{a,b}(s,t,x)\right|dx
=(t-s)^{-a-b/\gamma} \int_{\fR^d}\left|q_{a,b}(s,t,x)\right|dx
\leq N(t-s)^{-a-b/\gamma}.
\end{align*}
The corollary is proved. 
\end{proof}

\mysection{$L_\infty(\Lambda_{\gamma+m})$-estimate}

In this section, we introduce Littlewood-Paley operators, which  play a crucial role in modern analysis and are very helpful to characterize diverse function  spaces such as Sobolev space, Besov space, Lipschitz space, Hardy space, and Triebel-Lizorkin space. 
Applying this powerful characterization,  we  obtain an $L_\infty(\Lambda_{\gamma+m})$-estimate.
\smallskip

Choose a nonnegative function $\eta \in C_c^\infty(\fR^d)$ such that $\eta(\xi)=1$ for all $|\xi| \leq 1$ and
$\eta(\xi)=0$ for all $|\xi| \geq 2$.
For $n \in \bZ$, define $\delta_n(\xi)= \eta(2^{-n}\xi) -\eta(2^{-n+1}\xi)$.
Then obviously
$\delta_n$ has a support in $\left(2^{n-1} , 2^{n+1}\right)$ and
\begin{align}
						\label{sum 1}
 \sum_{n=-\infty}^\infty \delta_n(\xi)
=\eta(\xi) + \sum_{n=1}^\infty \delta_n(\xi)=1.
\end{align}
Denote
$$
\Phi(x) := \cF^{-1}[\eta(\xi)](x)
$$
and
$$
\Psi_n(x) := \cF^{-1}[\delta_n(\xi)](x)=\cF^{-1}[\eta(2^{-n}\xi)](x)-\cF^{-1}[\eta(2^{-n+1}\xi)](x).
$$
For a function $f \in C(\fR^d)$, we define the Littlewood-Paley operators as
$$
S_0(f)(x) := \Phi \ast f(x)
$$
and 
$$
\Delta_n f(x) := \Psi_n \ast f(x),
$$
where $\ast$ denotes the convolution on $\fR^d$, i.e. 
$$
f \ast g (x):=f(\cdot) \ast g(\cdot) (x) := \int_{\fR^d} f(x-y)g(y)dy.
$$
Here is the Littlewood-Paley characterization for the Lipschitz space.
\begin{thm}
						\label{norm thm}
Let $m>0$ and $f \in C(\fR^d)$. Then
\begin{enumerate}[(i)]
\item
$$
\|f\|_{\dot \Lambda_m} < \infty
$$
if and only if
\begin{align}
						\label{homo cha}
 \sup_{n  \in \bZ} 2^{nm}\|\Delta_n(f)\|_{L_\infty} <\infty
\end{align}

\item
$$
\|f\|_{\Lambda_m} < \infty
$$
if and only if
\begin{align}
						\label{e 1026 1}
\|S_0(f)\|_{L_\infty} + \sup_{n \geq 1} 2^{nm}\|\Delta_n(f)\|_{L_\infty} <\infty.
\end{align}
\end{enumerate}
Moreover, there exists a constant $N(d,m)$ so that
\begin{align*}
N^{-1}\|f\|_{ \dot \Lambda_m} 
\leq   \sup_{n \in \bZ} 2^{nm}\|\Delta_n(f)\|_{L_\infty}
\leq N\|f\|_{\dot \Lambda_m}
\end{align*}
and
\begin{align*}
N^{-1}\|f\|_{\Lambda_m} 
\leq  \|S_0(f)\|_{L_\infty} + \sup_{n \geq 1} 2^{nm}\|\Delta_n(f)\|_{L_\infty}
\leq N\|f\|_{\Lambda_m}.
\end{align*}
\end{thm}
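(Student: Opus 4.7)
The plan is to prove the two-sided estimates for the homogeneous case first and then deduce the inhomogeneous version by separately analyzing the low-frequency piece $S_0 f$. Throughout I would take $\eta$ to be radial, which makes each $\Psi_n$ even and real-valued, and use the scaling identity $\Psi_n(y) = 2^{nd}\Psi_0(2^n y)$ together with the rapid decay $|\Psi_0(y)| \leq N_M(1+|y|)^{-M}$ available for every $M$.

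For the direction $\sup_n 2^{nm}\|\Delta_n f\|_{L_\infty} \leq N \|f\|_{\dot\Lambda_m}$: since $\delta_n$ vanishes in a neighborhood of the origin, the kernel $\Psi_n$ has vanishing moments of all orders. Setting $k = \lfloor m \rfloor$, I would use the moment vanishing together with the evenness of $\Psi_n$ to rewrite the convolution $\Delta_n f(x) = \int \Psi_n(y) f(x-y)\, dy$ as an absolutely convergent integral of $\Psi_n$ against a $(k+1)$-fold symmetric finite difference of $f$. The Lipschitz hypothesis bounds this difference by $\|f\|_{\dot\Lambda_m}|y|^m$, and the scaling gives $\int |\Psi_n(y)|\,|y|^m\,dy \leq N 2^{-nm}$, yielding the claim. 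This argument is formally valid for all $n \in \bZ$, the symmetrized form ensuring that any polynomial growth of $f$ permitted by the homogeneous norm causes no divergence.

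For the converse bound, I would invoke the reconstruction $f = \sum_{n}\Delta_n f$ (in the sense of tempered distributions modulo polynomials of degree at most $\lfloor m \rfloor$) and apply $D_h^{k+1}$ termwise, splitting according to whether $2^n|h| \geq 1$ or $2^n|h| < 1$. Set $A := \sup_n 2^{nm}\|\Delta_n f\|_{L_\infty}$. In the high-frequency regime the trivial estimate $\|D_h^{k+1}(\Delta_n f)\|_{L_\infty} \leq 2^{k+1}\|\Delta_n f\|_{L_\infty} \leq NA\, 2^{-nm}$ summed over $2^n \geq |h|^{-1}$ gives $NA|h|^m$ (using $m > 0$). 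In the low-frequency regime, Bernstein's inequality (applicable because $\delta_n$ is supported in an annulus) produces $\|D^\beta \Delta_n f\|_{L_\infty} \leq N 2^{n|\beta|}\|\Delta_n f\|_{L_\infty}$, so $\|D_h^{k+1}(\Delta_n f)\|_{L_\infty} \leq NA|h|^{k+1}2^{n(k+1-m)}$; summing over $2^n \leq |h|^{-1}$ using $k+1-m > 0$ (since $k = \lfloor m \rfloor$) again yields $NA|h|^m$. Together these give $|D_h^{k+1}f(x)| \leq NA|h|^m$, which is the desired homogeneous Lipschitz bound.

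The inhomogeneous assertion follows readily: one combines the homogeneous estimates with $\|f\|_{L_\infty} \leq \|S_0 f\|_{L_\infty} + \sum_{n\geq 1}\|\Delta_n f\|_{L_\infty} \leq \|S_0 f\|_{L_\infty} + NA\sum_{n\geq 1}2^{-nm}$ and the reverse bound $\|S_0 f\|_{L_\infty} \leq \|\Phi\|_{L_1}\|f\|_{L_\infty} \leq N\|f\|_{\Lambda_m}$. The main technical obstacle I expect is the rigorous symmetrized-difference representation of $\Delta_n f$ used in the easy direction when $m \geq 1$: one must show that, owing to the total moment vanishing of $\Psi_n$ and the evenness of $\Psi_n$, the convolution $\Psi_n * f$ can be rewritten as an absolutely convergent integral against $(k+1)$-th symmetric differences of $f$ under only continuity assumptions on $f$. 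This reduces to writing the $(k+1)$-fold symmetric difference operator as a linear combination of translates whose generating polynomial in $y$ vanishes to high order at the origin, and then shifting the high-order vanishing from $f$ (which is not smooth) onto $\Psi_n$ (which is Schwartz).
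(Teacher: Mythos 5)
First, a point of reference: the paper does not actually prove Theorem \ref{norm thm}; its ``proof'' is the citation \cite[Theorems 6.3.6--6.3.7]{grafakos2009modern}. So your proposal is by necessity a different, more self-contained route. Your converse direction is correct and is the standard argument: write $D_h^{\lfloor m\rfloor+1}f=\sum_n D_h^{\lfloor m\rfloor+1}\Delta_nf$, use the trivial bound $\|D_h^{k+1}\Delta_nf\|_{L_\infty}\le 2^{k+1}\|\Delta_nf\|_{L_\infty}$ when $2^n|h|\ge1$ and Bernstein's inequality on the annulus when $2^n|h|<1$; the two geometric sums converge precisely because $0<m<k+1$ with $k=\lfloor m\rfloor$. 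The passage to the inhomogeneous statement, using $\|\Delta_nf\|_{L_\infty}\le\|\Psi_n\|_{L_1}\|f\|_{L_\infty}$ for $n\le0$, is likewise routine.

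The gap is in the forward direction, exactly where you flag it, and for $m\ge2$ it is not merely technical. The identity you want, $\Delta_nf(x)=c\int\Psi_n(y)\,\widetilde{D}_y^{\,k+1}f(x)\,dy$ with $\widetilde{D}$ a symmetric difference, is an exact identity only for $k+1\le2$: for $k+1=2$ and $\Psi_n$ even one indeed has $\int\Psi_n(y)\bigl(f(x+y)-2f(x)+f(x-y)\bigr)\,dy=2\,\Delta_nf(x)$, because the $f(x)$ term dies by the vanishing mean and both translates $f(x\pm y)$ reproduce $\Psi_n*f$. But for $k+1\ge3$ any $(k+1)$-fold difference involves translates $f(x+cy)$ with $c\notin\{0,\pm1\}$, and $\int\Psi_n(y)f(x+cy)\,dy$ is the Fourier multiplier $\delta_n(c\xi)$ applied to $f$ --- a Littlewood--Paley piece living on the annulus $|\xi|\sim 2^n/|c|$, not a scalar multiple of $\Delta_nf$. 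The vanishing moments of $\Psi_n$ kill only the $c=0$ term, so no amount of ``shifting vanishing onto $\Psi_n$'' turns the convolution into an integral against $(k+1)$-differences alone. The standard repair is structurally different: factor the multiplier as $\delta_n(\xi)=\widehat{K_n}(\xi)\,M(2^{-n}\xi)$, where $M(\xi)=\int\rho(h)\bigl(e^{i\xi\cdot h}-1\bigr)^{k+1}dh$ is the symbol of an $h$-averaged $(k+1)$-difference, verify that $\rho$ can be chosen so that $M$ has no zeros on $\{1/2\le|\xi|\le2\}$ (this is where real work hides --- naive choices of $\rho$ do produce zeros inside the annulus), and then conclude $\|\Delta_nf\|_{L_\infty}\le\|K_0\|_{L_1}\int|\rho(h)|\,|2^{-n}h|^m\,dh\;\|f\|_{\dot\Lambda_m}\le N2^{-nm}\|f\|_{\dot\Lambda_m}$ by scaling. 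Either carry that out, or restrict your direct argument to $m<2$ and, for general $m$, do what the paper does and cite \cite[Theorems 6.3.6--6.3.7]{grafakos2009modern}.
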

\begin{proof}
For the proof of this theorem, see \cite[Theorems 6.3.6 - 6.3.7]{grafakos2009modern}.
\end{proof}
Recall that
\begin{align*}
p(s,t,x):= \cF^{-1}\left[ \exp\left( \int_s^t \psi(r,\xi)dr \right) \right](x)
\end{align*}
and by Corollary \ref{cor 1014 1},
\begin{align*}
\sup_{s<t} \| p(s,t,\cdot)\|_{L_1(\fR^d)}  < \infty.
\end{align*}
Thus for a bounded measurable function $f$ on $[0,T] \times \fR^d$,
one can define
\begin{align*}
\cG f(t,x) := 
\int_0^t p(s,t,\cdot) \ast f(s,\cdot)(x) ds \quad \forall (t,x) \in [0,T] \times \fR^d
\end{align*}
and obtain
\begin{align}
							\notag
\sup_{t \in [0,T]}\|\cG f(t,\cdot) \|_{L_\infty(\fR^d)} 
&\leq  \int_0^T  \|p(s,t,\cdot)\| _{L_1(\fR^d)} \| f(s,\cdot)\|_{L_\infty(\fR^d)} ds \\
							\label{l infty bounded}
&\leq  N(d,\gamma,\nu,T) \sup_{s \in [0,T]} \| f(s,\cdot)\|_{L_\infty(\fR^d)}.
\end{align}

\begin{remark}
Let $f \in L_p\left((0,T); \dot \Lambda_{m} \right)$,  $f \in L_p\left((0,T); \Lambda_{m} \right)$, or $f \in L_p\left((0,T); C^{n+\alpha}\right)$, where
$p \in (1,\infty]$, $m > 0$, $n \in \bZ_+$, and $\alpha \in (0,1)$.
Then for any $(t,x) \in (0,T) \times \fR^d$, $\cG f(t,x)$ is well-defined 
and  for each $t \in (0,T)$, $\cG(t,x)$ is continuous with respect to $x$.
Moreover, we have
\begin{align*}
\|\cG f(t,\cdot)\|_{\dot \Lambda_m}
\leq Nt^{1/q}\|f\|_{L_p\left((0,t); \dot \Lambda_{m} \right)},
\end{align*}
\begin{align*}
\|\cG f(t,\cdot)\|_{\Lambda_m}
\leq Nt^{1/q}\|f\|_{L_p\left((0,t); \Lambda_{m} \right)},
\end{align*}
and
\begin{align}
						\label{e 1023 2}
\|\cG f(t,\cdot)\|_{C^{n+\alpha}}
\leq Nt^{1/q}\|f\|_{L_p\left((0,t); C^{n+\alpha} \right)},
\end{align}
where $N=N(d,\gamma, \nu)$ and $q$ is the H\"older conjugate of $p$, i.e. $1/p +1/q =1$.
\end{remark}
\begin{lemma}
						\label{l 1017 3}
Let $m>0$ and $f \in C(\fR^d)$. Then there exists a positive constant $N(d,\gamma, \nu, m)$ so that 
\begin{enumerate}[(i)]
\item for all $s<t$,
\begin{align*}
\|p(s,t,\cdot) \ast f(\cdot) \|_{\dot \Lambda_{\gamma+m}} 
\leq N (t-s)^{-1}  \|f\|_{\dot \Lambda_m}
\end{align*}
and
\begin{align}
						\label{e 0711 1}
\|p(s,t,\cdot) \ast f(\cdot) \|_{\Lambda_{\gamma+m}} 
\leq N\left(1+(t-s)^{-1}\right) \|f\|_{\Lambda_m}
\end{align}
\item for all $s \vee t_0 < t$,
\begin{align*}
\| \left(p(s,t,\cdot)-p(t_0,t,\cdot) \right) \ast f(\cdot) \|_{\dot \Lambda_{\gamma+m}} 
\leq N|s-t_0|\left(\left(t-(s\vee t_0) \right)^{-2}\right) \|f\|_{\dot \Lambda_m},
\end{align*}
and
\begin{align}
						\label{e 0711 2}
\| \left(p(s,t,\cdot)-p(t_0,t,\cdot) \right) \ast f(\cdot) \|_{\Lambda_{\gamma+m}} 
\leq N|s-t_0|\left(1+\left(t-(s\vee t_0) \right)^{-2}\right) \|f\|_{\Lambda_m},
\end{align}
\end{enumerate}
where $N=N(d,\gamma,\nu,m)$.
\begin{proof}
Due to the similarity of the proof, we only prove inhomogeneous type estimates (\ref{e 0711 1}) and (\ref{e 0711 2}). 
Moreover, we may assume that $\|f\|_{\Lambda_m} < \infty$ without loss of generality. 
\smallskip

(i) Choose a $\zeta \in C^\infty_c(\{\xi \in \fR^d : 2^{-2} \leq |\xi| \leq 2^2\})$ so that 
$\zeta(\xi) =1$ if $2^{-1} \leq |\xi| \leq 2$.
Recall $\delta_n(\xi) := \eta(2^{-n}\xi) - \eta(2^{-n+1}\xi)$,
$\eta(\xi)=1$ for all $|\xi| \leq 1$ and $\eta(\xi)=0$ for all $|\xi| \geq 2$. 
Thus $\delta_n(\xi)$ has a support in $\{\xi \in \fR^d : 2^{n-1} \leq |\xi| \leq 2^{n+1} \}$ and $\delta_n(\xi) = \delta_n(\xi) \zeta(2^{-n}\xi)$.
Observe that 
\begin{align}
						\label{e 1017 3}
S_0 \left( p(s,t,\cdot) \ast f(\cdot) \right)(x)
&=   p(s,t,\cdot) \ast  S_0(f)(\cdot)(x) 
\end{align}
and
\begin{align}
						\notag
\Delta_n(\left(p(s,t,\cdot) \ast f(\cdot) \right)(x)
&=\cF^{-1}\left[ \zeta(2^{-n}\xi)\delta_n(\xi) \exp\left( \int_s^t \psi(r,\xi)dr \right) \right](\cdot) \ast f(\cdot)(x)  \\
						\notag
&= \cF^{-1}\left[ \zeta(2^{-n}\xi) |\xi|^{-\gamma}\right](\cdot) \ast p_{0,\gamma}(s,t,\cdot) \ast \Psi_n(\cdot) \ast f(\cdot)(x)  \\
						\label{e 1017 4}
&= \cF^{-1}\left[ \zeta(2^{-n}\xi) |\xi|^{-\gamma}\right](\cdot) \ast  p_{0,\gamma} (s,t,\cdot) \ast  \Delta_n(f)(\cdot)(x) ds.
\end{align}
Therefore by Theorem \ref{norm thm}, Young's inequality,  and Corollary \ref{cor 1014 1},
\begin{align*}
&\|p(s,t,\cdot) \ast f(\cdot) \|_{\Lambda_{\gamma+m}} \\
&\leq N \left( 1+ \left(t-s \right)^{-1}
 \sup_{n \in \bN} 2^{n\gamma} \left\|\cF^{-1}\left[ \zeta(2^{-n}\xi) |\xi|^{-\gamma}\right] \right\|_{L_1(\fR^d)} \right)\|f\|_{\Lambda_m} .
\end{align*}
It only remains to observe that
$$
\left\|\cF^{-1}\left[ \zeta(2^{-n}\xi) |\xi|^{-\gamma}\right] \right\|_{L_1(\fR^d)} 
\leq N 2^{-n\gamma}.
$$

The proofs of (ii) is similar to (i). We only remark that by the mean-value theorem
$$
p(s,t,\cdot)-p(t_0,t,\cdot) 
= \frac{\partial}{\partial s} p( \theta s + (1-\theta)t_0,t,\cdot) \qquad (\theta \in [0,1])
$$
and by Corollary \ref{cor 1014 1},
\begin{align*}
\left\|\frac{\partial}{\partial s} p_{0,\gamma}( \theta s + (1-\theta)t_0,t,\cdot) \right\|_{L_1(\fR)}
&=\left\|p_{1,\gamma}( \theta s + (1-\theta)t_0,t,\cdot) \right\|_{L_1(\fR)} \\
&\leq N  \left| t-  \left( \theta s + (1-\theta)t_0 \right) \right|^{-2} .
\end{align*}
The lemma is proved. 
\end{proof}
Since $\int_0^t (t-s)^{-1}ds = \infty$, Lemma \ref{l 1017 3} is not enough to  show that $\cG$ is a bounded operator from  $L_\infty((0,T) ; \Lambda_{m})$ into $L_\infty((0,T) ; \Lambda_{\gamma+m})$. 
We need more delicate estimates. 
\end{lemma}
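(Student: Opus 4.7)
The natural approach is to use the Littlewood-Paley characterization of the Lipschitz norm from Theorem \ref{norm thm}, which reduces everything to uniform-in-$n$ bounds on $\|\Delta_n(p(s,t,\cdot)\ast f)\|_{L_\infty}$ and a separate bound on the low-frequency piece $\|S_0(p(s,t,\cdot)\ast f)\|_{L_\infty}$. Since convolution with $p(s,t,\cdot)$ commutes with the Fourier multipliers $\Delta_n$ and $S_0$, we have $\Delta_n(p(s,t,\cdot)\ast f)=p(s,t,\cdot)\ast\Delta_n f$ and similarly for $S_0$.

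For part (i), the key is to extract a smoothing factor of $|\xi|^{-\gamma}$ from the symbol of the high-frequency blocks. Pick a smooth cutoff $\zeta\in C_c^\infty(\fR^d)$ equal to $1$ on $\{2^{-1}\leq|\xi|\leq 2\}$ and supported in $\{2^{-2}\leq|\xi|\leq 2^2\}$; then $\delta_n(\xi)=\zeta(2^{-n}\xi)\delta_n(\xi)$, so
\begin{align*}
p(s,t,\cdot)\ast\Delta_n f=\cF^{-1}\bigl[\zeta(2^{-n}\xi)|\xi|^{-\gamma}\bigr]\ast p_{0,\gamma}(s,t,\cdot)\ast\Delta_n f.
\end{align*}
A dilation argument gives $\|\cF^{-1}[\zeta(2^{-n}\xi)|\xi|^{-\gamma}]\|_{L_1}\leq N 2^{-n\gamma}$, Corollary \ref{cor 1014 1} gives $\|p_{0,\gamma}(s,t,\cdot)\|_{L_1}\leq N(t-s)^{-1}$, and Theorem \ref{norm thm} gives $\|\Delta_n f\|_{L_\infty}\leq N 2^{-nm}\|f\|_{\dot\Lambda_m}$. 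Young's convolution inequality then yields $2^{n(\gamma+m)}\|\Delta_n(p(s,t,\cdot)\ast f)\|_{L_\infty}\leq N(t-s)^{-1}\|f\|_{\dot\Lambda_m}$, which proves the homogeneous estimate after taking the supremum over $n$. For the inhomogeneous statement, the low-frequency part satisfies $\|p(s,t,\cdot)\ast S_0 f\|_{L_\infty}\leq\|p(s,t,\cdot)\|_{L_1}\|S_0 f\|_{L_\infty}\leq N\|f\|_{\Lambda_m}$ by Corollary \ref{cor 1014 1}, producing the additive $1$ in $N(1+(t-s)^{-1})$.

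For part (ii), the mean value theorem gives $p(s,t,\cdot)-p(t_0,t,\cdot)=(s-t_0)\,\partial_{s'}p(s^*,t,\cdot)$ where $s^*=\theta s+(1-\theta)t_0$ for some $\theta\in[0,1]$. Differentiating in $s'$ brings down a factor $-\psi(s^*,\xi)=O(|\xi|^\gamma)$; after extracting the same $\zeta(2^{-n}\xi)|\xi|^{-\gamma}$ smoothing factor, the remaining high-frequency kernel has Fourier symbol $-\psi(s^*,\xi)|\xi|^\gamma\exp(\int_{s^*}^t\psi(r,\xi)dr)$, a $p_{1,\gamma}$-type kernel whose $L_1$-norm is bounded by $N(t-s^*)^{-2}$ by the same reasoning as in Corollary \ref{cor 1014 1}. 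Repeating the Littlewood-Paley argument and using $t-s^*\geq t-(s\vee t_0)$ yields the homogeneous bound $N|s-t_0|(t-(s\vee t_0))^{-2}\|f\|_{\dot\Lambda_m}$. For the inhomogeneous statement, the $S_0$-part is controlled by $\|p(s,t,\cdot)-p(t_0,t,\cdot)\|_{L_1}\leq|s-t_0|N(t-s^*)^{-1}$, and the elementary bound $x^{-1}\leq 1+x^{-2}$ for $x>0$ absorbs the $(t-s^*)^{-1}$ term into $1+(t-(s\vee t_0))^{-2}$, completing the estimate.

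The main technical hurdle is the kernel estimate $\|\cF^{-1}[\zeta(2^{-n}\xi)|\xi|^{-\gamma}]\|_{L_1}\leq N 2^{-n\gamma}$, but this reduces via the substitution $\xi=2^n\eta$ to showing $\|\cF^{-1}[\zeta(\eta)|\eta|^{-\gamma}]\|_{L_1}<\infty$, which is immediate because $\zeta(\eta)|\eta|^{-\gamma}$ is smooth and compactly supported, hence a Schwartz function. The only conceptual subtlety is the clean decomposition allowing one to harvest exactly one factor of $|\xi|^{-\gamma}$ (gaining $2^{-n\gamma}$ regularity) while the remaining multiplier keeps the time-decay from Corollary \ref{cor 1014 1}; this decoupling is what makes the Littlewood-Paley method more transparent here than direct kernel estimates on $p$ itself.
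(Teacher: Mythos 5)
Your proposal is correct and follows essentially the same route as the paper: the Littlewood--Paley reduction via Theorem \ref{norm thm}, the factorization $\Delta_n(p\ast f)=\cF^{-1}[\zeta(2^{-n}\xi)|\xi|^{-\gamma}]\ast p_{0,\gamma}\ast\Delta_n f$ with the dilation bound $\|\cF^{-1}[\zeta(2^{-n}\xi)|\xi|^{-\gamma}]\|_{L_1}\leq N2^{-n\gamma}$, Young's inequality together with Corollary \ref{cor 1014 1}, and the mean-value theorem plus the $p_{1,\gamma}$ bound for part (ii). Your explicit treatment of the $S_0$-piece in (ii) via $x^{-1}\leq 1+x^{-2}$ fills in a detail the paper leaves implicit, but the argument is the same.
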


\begin{lemma}
						\label{l 1017 1}
Let $\zeta(\xi) \in C^\infty_c\left( \{\xi \in \fR^d : 2^{-2} \leq |\xi| \leq 2^2\} \right)$.
Then there exists a constant $c>0$ and $N>0$ so that for all $s<t$ and $n \in \bN$, 
\begin{align*}
\left\| \cF^{-1}\left[ \zeta(2^{-n} \xi)\exp\left( \int_s^t \psi(r,\xi)dr \right) \right](\cdot) \right\|_{L_1(\fR^d)}
\leq N e^{-c(t-s)2^{n\gamma}},
\end{align*}
where $c$ and $N$ depend only on $d$, $\gamma$, $\nu$, and $\zeta$. 
\end{lemma}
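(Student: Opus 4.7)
The plan is to reduce to a fixed dyadic annulus by scaling, then bound the $L_1$ norm by a weighted $L_2$ norm via Cauchy--Schwarz and control derivatives of the rescaled symbol directly via $(\ref{sym1})$ and $(\ref{sym2})$.

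First I would rescale. Set $h_n(\eta) := \zeta(\eta) \exp\!\left( \int_s^t \psi(r, 2^n \eta) \, dr \right)$, so that $\zeta(2^{-n}\xi)\exp\!\left(\int_s^t \psi(r,\xi)dr\right) = h_n(2^{-n}\xi)$. Then $\cF^{-1}[h_n(2^{-n}\cdot)](x) = 2^{nd}\cF^{-1}[h_n](2^n x)$, and an obvious change of variables gives
\begin{align*}
\left\| \cF^{-1}\!\left[ \zeta(2^{-n}\cdot)\exp\!\left(\int_s^t \psi(r,\cdot)dr\right)\right]\right\|_{L_1(\fR^d)} = \|\cF^{-1}[h_n]\|_{L_1(\fR^d)}.
\end{align*}
Since $h_n$ is supported in the fixed annulus $\{2^{-2}\leq|\eta|\leq 2^2\}$, the problem is reduced to an $n$-independent support.

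Next I would estimate derivatives of $h_n$ on the support of $\zeta$. Let $G(\eta):= \int_s^t \psi(r,2^n\eta)dr$. On $\mathrm{supp}\,\zeta$ we have $|2^n\eta|\in [2^{n-2}, 2^{n+2}]$, so by $(\ref{sym1})$,
\begin{align*}
\Re G(\eta) \leq -\nu (t-s) |2^n\eta|^\gamma \leq -\nu 4^{-\gamma}(t-s) 2^{n\gamma} =: -c_0 (t-s) 2^{n\gamma},
\end{align*}
and by $(\ref{sym2})$, for $1\le|\beta|\le d_0$,
\begin{align*}
|D^{\beta}_\eta G(\eta)|\leq 2^{n|\beta|}\int_s^t |(D_\xi^\beta\psi)(r,2^n\eta)|dr \leq N (t-s) 2^{n\gamma}.
\end{align*}
A Fa\`a di Bruno expansion of $D^\alpha(e^{G})$ and Leibniz's rule applied to $h_n = \zeta\, e^{G}$ then give, for every multi-index $\alpha$ with $|\alpha|\le d_0$,
\begin{align*}
|D_\eta^\alpha h_n(\eta)| \leq N \sum_{k=0}^{|\alpha|} \bigl[(t-s)2^{n\gamma}\bigr]^k e^{-c_0(t-s)2^{n\gamma}} \leq N e^{-c(t-s)2^{n\gamma}}
\end{align*}
uniformly in $\eta$, for any $c\in(0,c_0)$, where I absorb the polynomial factor $x^k$ into a slightly smaller exponential. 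Since $h_n$ is supported in a fixed compact set, the same bound holds in $L_2(\fR^d)$ for every $|\alpha|\leq d_0$.

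Finally I apply Cauchy--Schwarz and Plancherel. With $d_0=\lfloor d/2\rfloor+1 > d/2$, we have $(1+|y|^2)^{-d_0/2}\in L_2(\fR^d)$, so
\begin{align*}
\|\cF^{-1}[h_n]\|_{L_1}
\leq \|(1+|y|^2)^{d_0/2}\cF^{-1}[h_n]\|_{L_2}\cdot\|(1+|y|^2)^{-d_0/2}\|_{L_2}
\leq N\sum_{|\alpha|\leq d_0}\|D^\alpha h_n\|_{L_2},
\end{align*}
by Plancherel applied to $(I-\Delta_\eta)^{d_0/2}h_n$. Combining with the derivative bound yields the desired $Ne^{-c(t-s)2^{n\gamma}}$. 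The only delicate point is the Fa\`a di Bruno bookkeeping together with the absorption of the polynomial factors $[(t-s)2^{n\gamma}]^k$ into $e^{-c(t-s)2^{n\gamma}}$ at the cost of a smaller constant $c$; everything else is routine given $(\ref{sym1})$ and $(\ref{sym2})$.
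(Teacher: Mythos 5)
Your proposal is correct and follows essentially the same route as the paper: rescale to the fixed annulus, bound the $L_1$ norm by a $(1+|x|)^{d_0}$-weighted $L_2$ norm via Cauchy--Schwarz, convert the weight to $\xi$-derivatives of the symbol, and apply Plancherel together with the pointwise bound $|D^\alpha(\zeta(\xi)e^{\int_s^t\psi(r,2^n\xi)dr})|\le N 1_{2^{-2}\le|\xi|\le 2^2}\,e^{-c(t-s)2^{n\gamma}}$. The only difference is cosmetic: you spell out the Fa\`a di Bruno/Leibniz bookkeeping and the absorption of the polynomial factors into the exponential, which the paper asserts without detail.
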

\begin{proof}
It suffices to show that for all $j = 1 ,\ldots, d$,
\begin{align}
							\notag
&\int_{\fR^d} \left(\cF^{-1}\left[ \zeta(\xi)\exp\left( \int_s^t \psi(r,2^n\xi)dr \right) \right](x) \right)^2 dx  \\
							\label{e 1015 1}
&\quad+ \int_{\fR^d} \left( \cF^{-1}
\left[ D_{\xi}^{d_0e_j}\left(\zeta(\xi)\exp\left( \int_s^t \psi(r,\xi)dr \right) \right)\right](x) \right)^2 dx 
\leq N e^{-2c(t-s)2^{n\gamma}}.
\end{align}
Indeed, if (\ref{e 1015 1}) holds, then by the change of variable, H\"older's inequality, and the property of the Fourier inverse transform that
$$
x^j\cF^{-1}[g(\xi)](x) 
=i\cF^{-1}[ D^j\left(g(\xi) \right)](x),
$$
we have
\begin{align*}
&\left\| \cF^{-1}\left[ \zeta(2^{-n}\xi)\exp\left( \int_s^t \psi(r,\xi)dr \right) \right](\cdot) \right\|_{L_1(\fR^d)} \\
&=\left\| \cF^{-1}\left[ \zeta(\xi)\exp\left( \int_s^t \psi(r,2^n\xi)dr \right) \right](\cdot) \right\|_{L_1(\fR^d)} \\
&\leq 
N\left(\int_{\fR^d}\left| (1+|x|)^{d_0}\cF^{-1}\left[ \zeta(\xi)\exp\left( \int_s^t \psi(r,2^n\xi)dr \right) \right](x) \right|^2 dx \right)^{1/2} \\
&\leq N\sum_{j=1}^d \left(\int_{\fR^d}\left| (1+|x^j|^{d_0})\cF^{-1}\left[ \zeta(\xi)\exp\left( \int_s^t \psi(r,2^n\xi)dr \right) \right](x) \right|^2 dx \right)^{1/2}  \\
&\leq N\left(\int_{\fR^d}\left| \cF^{-1}\left[ \zeta(\xi)\exp\left( \int_s^t \psi(r,2^n\xi)dr \right) \right](x) \right|^2 dx \right)^{1/2}  \\
&\quad +N\sum_{j=1}^d \left(\int_{\fR^d}\left|\cF^{-1}\left[ D_{\xi}^{d_0e_j}
\left(\zeta(\xi)\exp\left( \int_s^t \psi(r,2^n\xi)dr \right) \right)\right](x) \right|^2 dx \right)^{1/2}  \\
&\leq N  e^{-c(t-s)2^{n\gamma}}.
\end{align*}
Due to (\ref{sym1}), and (\ref{sym2}), and the assumption that $\zeta$ has a compact support in  $\{\xi \in \fR^d : 2^{-2} \leq |\xi| \leq 2^2\}$, 
there exist positive constants $N$ and $c$ so that for all $s<t$, $n \in \bN$, $\xi \in \fR^d \setminus \{0\}$, and multi-index $|\alpha|  \leq d_0$,
\begin{align*}
\left| D^\alpha \left(\zeta(\xi)\exp\left( \int_s^t \psi(r,2^n\xi)dr \right)  \right) \right|
\leq N1_{2^{-2} \leq |\xi| \leq 2^2} \cdot e^{-c(t-s)2^{n \gamma}}.
\end{align*}
Therefore by Plancherel's theorem, the left term of (\ref{e 1015 1}) is less than or equal to
\begin{align*}
N \int_{\fR^d}1_{2^{-2} \leq |\xi| \leq 2^2}e^{-2c(t-s)2^{n \gamma}} d\xi
\leq N e^{-2c(t-s)2^{n \gamma}}.
\end{align*}
The lemma is proved. 
\end{proof}
For a function $f(t,x)$, we use the notation that
$S_0(f(t,x)):=S_0f(t,\cdot)(x)$
and $\Delta_n(f(t,x)) := \Delta(f(t,\cdot))(x)$. Here is the main result of this section. 
\begin{thm}
						\label{t 1018 1}
Let $m>0$ and $T \in (0,\infty)$. Then $\cG$ is a bounded operator  from  $L_\infty((0,T) ; \dot \Lambda_{m})$ into $L_\infty((0,T) ; 
\dot \Lambda_{\gamma+m})$ and from  $L_\infty((0,T) ; \Lambda_{m})$ into $L_\infty((0,T) ; \Lambda_{\gamma+m})$.
More precisely, there exist positive constants $N_1$ and $N_2$ so that
\begin{align*}
\|\cG f\|_{L_\infty((0,T) ; \dot \Lambda_{\gamma+m})} 
\leq N_1 \|f\|_{L_\infty((0,T) ; \dot  \Lambda_{m})} \quad \forall f \in L_\infty((0,T) ; \dot \Lambda_{m}),
\end{align*}
and
\begin{align*}
\|\cG f\|_{L_\infty((0,T) ; \Lambda_{\gamma+m})} \leq N_2 \|f\|_{L_\infty((0,T) ; \Lambda_{m})} \quad \forall f \in L_\infty((0,T) ; \Lambda_{m}),
\end{align*}
where $N_1=N_1(d,\gamma,\nu,m)$ and $N_2=N_2(d,\gamma,\nu,m,T)$.
\end{thm}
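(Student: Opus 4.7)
The plan is to use the Littlewood--Paley characterization of $\Lambda_{\gamma+m}$ from Theorem~\ref{norm thm} to reduce matters to uniform estimates on the frequency-localized pieces $S_0(\cG f(t,\cdot))$ and $\Delta_n(\cG f(t,\cdot))$; the gain of $\gamma$ derivatives will come entirely from the spectral-gap exponential factor $e^{-c(t-s)2^{n\gamma}}$ produced by Lemma~\ref{l 1017 1}.

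First, I would fix $\zeta \in C_c^\infty(\{2^{-2} \leq |\xi| \leq 2^2\})$ with $\zeta \equiv 1$ on $\{2^{-1} \leq |\xi| \leq 2\}$, so that $\delta_n(\xi) = \delta_n(\xi)\zeta(2^{-n}\xi)$ for every $n \in \bZ$. Since convolution is commutative, $\Delta_n$ passes through $p(s,t,\cdot)\ast$, and inserting this cutoff in Fourier yields the factorization
\begin{align*}
\Delta_n\bigl(p(s,t,\cdot)\ast f(s,\cdot)\bigr)(x) = \cF^{-1}\Bigl[\zeta(2^{-n}\xi)\exp\Bigl(\int_s^t \psi(r,\xi)\,dr\Bigr)\Bigr]\ast \Delta_n(f(s,\cdot))(x).
\end{align*}
Applying Young's inequality combined with Lemma~\ref{l 1017 1} and the Littlewood--Paley bound $\|\Delta_n(f(s,\cdot))\|_{L_\infty} \leq N 2^{-nm}\|f(s,\cdot)\|_{\dot\Lambda_m}$ from Theorem~\ref{norm thm}, together with the elementary estimate $\int_0^t e^{-c(t-s)2^{n\gamma}}\,ds \leq (c 2^{n\gamma})^{-1}$, then gives
\begin{align*}
2^{n(\gamma+m)}\|\Delta_n(\cG f(t,\cdot))\|_{L_\infty} \leq \frac{N}{c}\,\|f\|_{L_\infty((0,T);\dot\Lambda_m)}
\end{align*}
uniformly in $n \in \bZ$ and $t \in (0,T)$. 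Taking the supremum in $n$ and reapplying Theorem~\ref{norm thm} produces the homogeneous estimate with $N_1 = N_1(d,\gamma,\nu,m)$, independent of $T$.

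For the inhomogeneous bound I would additionally control the low-frequency block by commuting $S_0 = \Phi\ast$ through $p(s,t,\cdot)\ast$:
\begin{align*}
\|S_0(\cG f(t,\cdot))\|_{L_\infty} \leq \int_0^t \|p(s,t,\cdot)\|_{L_1}\|S_0(f(s,\cdot))\|_{L_\infty}\,ds \leq NT\,\|f\|_{L_\infty((0,T);\Lambda_m)},
\end{align*}
using Young's inequality and the first bound of Corollary~\ref{cor 1014 1}. Combining this with the high-frequency estimate (now for $n \geq 1$, using $\|\Delta_n(f(s,\cdot))\|_{L_\infty} \leq N 2^{-nm}\|f(s,\cdot)\|_{\Lambda_m}$) and invoking Theorem~\ref{norm thm} yields the inhomogeneous bound with $N_2 = N_2(d,\gamma,\nu,m,T)$.

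The main technical point I anticipate is verifying that Lemma~\ref{l 1017 1}, although stated for $n \in \bN$, in fact holds uniformly for all $n \in \bZ$. The dilation $\xi \mapsto 2^n\xi$ in its proof combined with (\ref{sym1}) on the support of $\zeta$ still yields $\Re\psi(r,2^n\xi) \leq -\nu 2^{-2\gamma}\cdot 2^{n\gamma}$ there, and (\ref{sym2}) retains the same structure under dilation because $2^{n|\alpha|}D_\xi^\alpha\psi(r,2^n\xi)$ is bounded by $\nu^{-1}2^{n\gamma}|\xi|^{\gamma-|\alpha|}$, so the Plancherel step in (\ref{e 1015 1}) produces the same $N e^{-c(t-s)2^{n\gamma}}$ bound regardless of the sign of $n$. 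Once this uniformity is in hand, the $T$-independence of $N_1$ follows immediately from the $n$-uniform integral estimate $\int_0^t e^{-c(t-s)2^{n\gamma}}\,ds \leq (c2^{n\gamma})^{-1}$.
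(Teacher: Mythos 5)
Your proposal is correct and follows essentially the same route as the paper: the Littlewood--Paley characterization of Theorem~\ref{norm thm}, the factorization $\Delta_n(\cG f(t,\cdot)) = \int_0^t p^{\zeta_n}(s,t,\cdot)\ast\Delta_n(f)(s,\cdot)\,ds$ via the cutoff $\zeta$, the $L_1$-bound $N e^{-c(t-s)2^{n\gamma}}$ of Lemma~\ref{l 1017 1}, the $n$-uniform integral $\int_0^t 2^{n\gamma}e^{-c(t-s)2^{n\gamma}}ds \leq c^{-1}$, and Corollary~\ref{cor 1014 1} for the $S_0$ block. Your explicit verification that Lemma~\ref{l 1017 1} extends uniformly to all $n\in\bZ$ (needed for the homogeneous, $T$-independent estimate) is a point the paper's proof uses but does not spell out, and your argument for it is correct.
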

\begin{proof}
Choose a $\zeta \in C^\infty_c(\{\xi \in \fR^d : 2^{-2} \leq |\xi| \leq 2^2\})$ so that 
$\zeta(\xi) =1$ if $2^{-1} \leq |\xi| \leq 2$ as in the proof of Lemma \ref{l 1017 3}. 
Similarly to (\ref{e 1017 3}) and (\ref{e 1017 4}), we have
\begin{align}
						\notag
S_0 \left( \cG f(t,\cdot)\right)(x)
&= \int_0^t \cF^{-1}\left[ \eta(\xi) \exp\left( \int_s^t \psi(r,\xi)dr \right) \right](\cdot) \ast f(s,\cdot)(x) ds \\
						\notag
&= \int_0^t  p(s,t,\cdot) \ast \Phi(\cdot) \ast f(s,\cdot)(x) ds \\
						\label{e 1017 1}
&= \int_0^t  p(s,t,\cdot) \ast  S_0(f)(s,\cdot)(x) ds 
\end{align}
and
\begin{align}
						\notag
\Delta_n(\cG f(t,\cdot))(x)
&= \int_0^t \cF^{-1}\left[ \zeta(2^{-n}\xi)\delta_n(\xi) \exp\left( \int_s^t \psi(r,\xi)dr \right) \right](\cdot) \ast f(s,\cdot)(x) ds \\
						\notag
&= \int_0^t  p^{\zeta_n} (s,t,\cdot) \ast \Psi_n(\cdot) \ast f(s,\cdot)(x) ds \\
						\label{e 1017 2}
&= \int_0^t  p^{\zeta_n} (s,t,\cdot) \ast  \Delta_n(f)(s,\cdot)(x) ds,
\end{align}
where 
$$
p^{\zeta_n} (s,t,x)
:=\cF^{-1}\left[ \zeta(2^{-n} \xi)\exp\left( \int_s^t \psi(r,\xi)dr \right) \right](x).
$$
Thus by Theorem \ref{norm thm}, (\ref{e 1017 1}), and (\ref{e 1017 2}),
\begin{align*}
\|\cG f(t,\cdot)\|_{ \dot \Lambda_{\gamma+m}} 
&\leq  N \sup_{n  \in \bZ} \int_0^t 2^{n (m+\gamma)} \sup_{x \in \fR^d}\left| p^{\zeta_n}(t,s,\cdot) \ast \Delta_n(f)(s,\cdot)(x) \right|ds\\
&\leq N\sup_{n \in \bZ} \int_0^t \int_{\fR^d} \left(2^{n\gamma} |p^{\zeta_n}(t,s,x)| \right) dx ds \|f\|_{L_\infty((0,T) ; \dot \Lambda_m)}
\end{align*}
and
\begin{align*}
\|\cG f(t,\cdot)\|_{\Lambda_{\gamma+m}} 
&\leq N \int_0^t   \sup_{x \in \fR^d}\left|p(s,t,\cdot) \ast S_0f(s,x) \right|ds \\
&\quad + N \sup_{n \geq 1} \int_0^t 2^{n (m+\gamma)} \sup_{x \in \fR^d}\left| p^{\zeta_n}(t,s,\cdot) \ast \Delta_n(f)(s,\cdot)(x) \right|ds\\
&\leq N\sup_{n \geq 1} \int_0^t \int_{\fR^d} \left(|p(s,t,x)|+ 2^{n\gamma} |p^{\zeta_n}(t,s,x)| \right) dx ds \|f\|_{L_\infty((0,T) ; \Lambda_m)}.
\end{align*}
Therefore it suffices to find positive constants $N_1(d,\gamma, \nu,m)$ and $N_2(d,\gamma,\nu,m,T)$ so that
$$
\sup_{t \in [0,T]}\int_0^t \int_{\fR^d} |p(s,t,x)| dx ds \leq N_2
$$
and
$$
\sup_{t \in [0,T]} \sup_{n  \in \bZ} 2^{n\gamma}  \int_0^t \int_{\fR^d}  |p^{\zeta_n}(t,s,x)| dx ds \leq N_1
$$
But this is an easy consequence of  Corollary \ref{cor 1014 1} and Lemma \ref{l 1017 1}.
The theorem is proved. 
\end{proof}

\mysection{Proof of main theorems}
											\label{pf main thm}

To prove that $\cG$ is a bounded operator from $L_p((0,T) ;\dot  \Lambda_{m})$ into $L_p((0,T) ; \dot  \Lambda_{\gamma+m})$ with $p \in (1,\infty)$, we use the Marcinkiewicz interpolation theorem.
Since it is already shown that $\cG$ is a bounded operator from  $L_\infty((0,T) ; \dot \Lambda_{m})$ into $L_\infty((0,T) ; \dot \Lambda_{\gamma+m})$, we only need to check that $\cG$ satisfies the weak type $(1,1)$-estimate.
\begin{lemma}
					\label{l w11}
Let $T>0$ and $f \in L_1\left((0,T);  \dot  \Lambda_m\right) \cap L_\infty\left((0,T);  \dot \Lambda_m\right)$.
Then there exists a positive constant $N$ so that for  all $\lambda >0$,
\begin{align}
						\label{e 1018 1}
\lambda |\{ t \in (0,T) : \|\cG f(t,\cdot)\|_{  \dot \Lambda_{m+\gamma}} >\lambda \}|
\leq N\int_0^T\|f(t,\cdot)\|_{  \dot  \Lambda_m}dt,
\end{align}
where $N=N(d,\gamma, \nu, m)$. 
\end{lemma}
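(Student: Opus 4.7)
The proof will follow a Calder\'on--Zygmund decomposition in the time variable, viewing $f$ as a Banach-space-valued function $f\colon (0,T)\to\dot\Lambda_{m}$. The two ingredients are already in place: the $L_\infty\to L_\infty$ boundedness $\|\cG g\|_{L_\infty((0,T);\dot\Lambda_{\gamma+m})}\le N_1\|g\|_{L_\infty((0,T);\dot\Lambda_m)}$ from Theorem \ref{t 1018 1}, and the H\"ormander-type smoothness (\ref{e 0711 2}) in the ``time'' parameter $s$ from Lemma \ref{l 1017 3}(ii). We may also assume the integrand $\|f(\cdot)\|_{\dot\Lambda_m}$ is locally integrable on $(0,T)$.

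\textbf{Step 1 (CZ decomposition).} Fix $\lambda>0$ and a small $\alpha>0$ to be chosen. Apply the one-dimensional Calder\'on--Zygmund decomposition to the scalar function $t\mapsto\|f(t,\cdot)\|_{\dot\Lambda_m}$ at level $\alpha\lambda$, yielding disjoint intervals $\{I_k\}$ with $\sum_k|I_k|\le N(\alpha\lambda)^{-1}\|f\|_{L_1((0,T);\dot\Lambda_m)}$, $\|f(t,\cdot)\|_{\dot\Lambda_m}\le \alpha\lambda$ for a.e.\ $t\notin\bigcup_k I_k$, and $\dashint_{I_k}\|f(s,\cdot)\|_{\dot\Lambda_m}\,ds\le N\alpha\lambda$. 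Split $f=g+b$ with $g(t,\cdot):=\dashint_{I_k}f(s,\cdot)\,ds$ for $t\in I_k$ and $g=f$ elsewhere, and $b_k(t,\cdot):=\bigl(f(t,\cdot)-\dashint_{I_k}f\bigr)\mathbf{1}_{I_k}(t)$, $b=\sum_k b_k$. Using (\ref{e 1029 2}), $\|g(t,\cdot)\|_{\dot\Lambda_m}\le N\alpha\lambda$ for a.e.\ $t$, and each $b_k$ has vanishing integral (in $\dot\Lambda_m$) over $I_k$.

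\textbf{Step 2 (good part).} By Theorem \ref{t 1018 1}, $\|\cG g(t,\cdot)\|_{\dot\Lambda_{\gamma+m}}\le N_1\cdot N\alpha\lambda$ a.e. Choose $\alpha$ small enough, depending only on the constants $N_1,N$ above, so that $N_1 N\alpha\le 1/4$. Then $\{t:\|\cG g(t,\cdot)\|_{\dot\Lambda_{\gamma+m}}>\lambda/2\}$ is a null set.

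\textbf{Step 3 (bad part).} Let $I_k^{*}$ be the interval with the same center as $I_k$ but twice the length. The exceptional set $E:=\bigcup_k I_k^{*}$ has measure $|E|\le 2\sum_k|I_k|\le N\lambda^{-1}\|f\|_{L_1((0,T);\dot\Lambda_m)}$. For $t\notin E$, writing $c_k$ for the left endpoint of $I_k$: if $t\le \inf I_k$, then $\cG b_k(t,\cdot)=0$; otherwise $t\ge\sup I_k^{*}$, and using $\int_{I_k}b_k(s,\cdot)\,ds=0$,
\begin{equation*}
\cG b_k(t,\cdot)=\int_{I_k}\bigl(p(s,t,\cdot)-p(c_k,t,\cdot)\bigr)\ast b_k(s,\cdot)\,ds.
\end{equation*}
Lemma \ref{l 1017 3}(ii) (the homogeneous estimate) gives
\begin{equation*}
\|\cG b_k(t,\cdot)\|_{\dot\Lambda_{\gamma+m}}\le N|I_k|(t-c_k)^{-2}\int_{I_k}\|b_k(s,\cdot)\|_{\dot\Lambda_m}\,ds,
\end{equation*}
since $t-(s\vee c_k)\ge(t-c_k)/2$ when $t\ge\sup I_k^{*}$. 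Integrating in $t$ over $\{t:t-c_k\ge|I_k|\}$ yields $\int_{(0,T)\setminus E}\|\cG b_k(t,\cdot)\|_{\dot\Lambda_{\gamma+m}}\,dt\le N\int_{I_k}\|b_k(s,\cdot)\|_{\dot\Lambda_m}\,ds\le N\int_{I_k}\|f(s,\cdot)\|_{\dot\Lambda_m}\,ds$. Summing over $k$ and applying Chebyshev,
\begin{equation*}
\bigl|\{t\in(0,T)\setminus E:\|\cG b(t,\cdot)\|_{\dot\Lambda_{\gamma+m}}>\lambda/2\}\bigr|\le N\lambda^{-1}\|f\|_{L_1((0,T);\dot\Lambda_m)}.
\end{equation*}
Combining with Step~2 and the trivial bound $|E|\le N\lambda^{-1}\|f\|_{L_1}$ proves (\ref{e 1018 1}).

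\textbf{Main obstacle.} The technical subtlety is that we are working with a \emph{vector-valued} CZ decomposition in $\dot\Lambda_m$. The cancellation property $\int_{I_k}b_k(s,\cdot)\,ds=0$ is needed in the Banach space $\dot\Lambda_m$ (as a Bochner integral), and the interaction between this mean-zero property and the kernel smoothness (\ref{e 0711 2}) must respect the $\dot\Lambda_{\gamma+m}$-norm; this is precisely why Lemma \ref{l 1017 3}(ii) was formulated convolutionally as an operator bound rather than as a pointwise kernel estimate. Once these vector-valued manipulations are carefully justified (and the parabolic one-sidedness of the kernel $p(s,t,\cdot)$ for $s<t$ is accounted for), the scalar Calder\'on--Zygmund mechanism runs unchanged.
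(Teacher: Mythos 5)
Your proposal is correct and follows essentially the same route as the paper: a Calder\'on--Zygmund decomposition in $t$ of the scalar function $\|f(t,\cdot)\|_{\dot\Lambda_m}$, the $L_\infty$ bound of Theorem \ref{t 1018 1} for the good part, and the cancellation of the bad part combined with the kernel-difference estimate of Lemma \ref{l 1017 3}(ii) off the enlarged intervals. The only differences are cosmetic normalizations (your choice of a small level $\alpha\lambda$ versus the paper's rescaling of $f$, and doubled versus tripled intervals).
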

\begin{proof}
It suffices to find positive constants $N_1, N_2$ which only depend on $d$, $\gamma$, $\nu$, and $m$ so that
for all $\lambda>0$,
\begin{align}
						\label{e 1018 2}
\lambda |\{ t \in (0,T) : \|\cG f(t,\cdot)\|_{ \dot \Lambda_{m+\gamma}} > N_1 \lambda \}|
\leq N_2\int_0^T\|f(t,\cdot)\|_{ \dot \Lambda_m}dt,
\end{align}
If we consider $N_1f(t,x)$ instead of $f$ in (\ref{e 1018 2}), then (\ref{e 1018 1}) is obtained.

Consider a class of dyadic cubes in $\fR$ such that
$$
Q_{k,l} := \left[2^kl , 2^k(l+1) \right) \qquad k.l \in \bZ
$$
and denote
$$
Q^\ast_{k,l} := \left[2^k(l -1) , 2^k(l+2) \right) 
$$
For a fuction $ \bar f(t):=1_{0<t<T}\|f(t,\cdot)\|_{\dot \Lambda_m}$, we apply the Calder\'on-Zygmund decomposition (for instance, see 
\cite[Theorem 4.3.1]{grafakos2008classical}).
Then for any $\lambda >0$ we have
\begin{align*}
\bar f(t)
&= g_\lambda(t) + b_{\lambda}(t),
\end{align*}
where
\begin{align*}
g_\lambda (t) = 
\begin{cases}
& \bar f(t) \qquad \text{if}~t \in \left( \cup_{j}Q_j \right)^c \\
&\frac{1}{|Q_j|} \int_{Q_j}\bar f(r)dr \qquad  \text{if}~ t \in  Q_j,
\end{cases}
\end{align*}
$$
b_{\lambda}(t) := \bar f(t) -g_\lambda (t)=\sum_{j}1_{Q_j}(t)\left( \bar f(t) - \int_{Q_j}\bar f(r)dr \right)
=: \sum_j b_{\lambda,j}(t),
$$
and $Q_j= Q_{k_j,l_j}$ for some $k_j,l_j \in \bZ$ . 
Here $g_\lambda$ and $b_\lambda$ satisfy the followings:
\begin{enumerate}[(i)]
\item $$
\|g_\lambda \|_{L_1(\fR)} \leq \|\bar f\|_{L_1(\fR)} \quad \text{and} \quad \|g_\lambda\|_{L^\infty} \leq 2 \lambda,
$$
\label{i 1017 1}
\item 
Each $b_j$ is supported in a dyadic cube $Q_j$. Furthermore, $Q_j$ and $Q_\ell$ are disjoint if $j \neq \ell$. 
\item 
$$
\int_{Q_j}b_{\lambda,j}(x) dx=0.
$$
\item
$$
\|b_{\lambda,j}\|_{L_1(\fR)} \leq 2^{2} \lambda |Q_j|.
$$
\item
$$
\sum_{j} |Q_j| \leq \lambda^{-1} \|\bar f\|_{L_1(\fR)}.
$$
\end{enumerate}
Denote $f_T (t,x) = 1_{0<t<T} f(t,x)$ and consider the decomposition
$$
f_T(t,x) = f_{\lambda,1}(t,x)+f_{\lambda,2}(t,x),
$$
where
\begin{align*}
f_{\lambda,1}(t,x)=
\begin{cases}
&f_T(t,x) \qquad \text{if}~t \in \left( \cup_{j}Q_j \right)^c \\
&\frac{1}{|Q_j|}\int_{Q_j} f_T(r,x)dr \qquad  \text{if}~ t \in  Q_j,
\end{cases}
\end{align*}
and
\begin{align*}
f_{\lambda,2}(t,x) = \sum_{j} 1_{Q_j}(t) \left( f_T(t,x) -  \frac{1}{|Q_j|}\int_{Q_j} f_T(r,x)dr\right).
\end{align*}
Note that $\|f_T(t,\cdot)\|_{ \dot \Lambda_m}= \bar f (t)$ and thus 
$$
\|f_{\lambda,1}(t,\cdot) \|_{  \dot \Lambda_m} \leq |g_\lambda(t)|.
$$
Hence by Theorem \ref{t 1018 1} and (i), there exists a positive constant $N_0(d,\gamma, \nu,m)$ so that
$$
\|\cG (f_{\lambda,1})\|_{L_\infty( (0,T) ; \dot \Lambda_{m+\gamma})}
\leq \frac{N_0}{2} \sup_{t} | g_\lambda (t)| \leq N_0\lambda.
$$
Therefore, 
\begin{align*}
& |\{ t \in (0,T) : \|\cG f(t,\cdot)\|_{ \dot \Lambda_{m+\gamma}} >  4N_0\lambda \}| \\
&\leq  |\{ t \in (0,T) : \|\cG f_{\lambda,1}(t,\cdot)\|_{ \dot \Lambda_{m+\gamma}} >  2N_0 \lambda \}|
+|\{ t \in (0,T) : \|\cG f_{\lambda,2}(t,\cdot)\|_{\dot \Lambda_{m+\gamma}} >  2N_0 \lambda \}| \\
&= |\{ t \in (0,T) : \|\cG f_{\lambda,2}(t,\cdot)\|_{ \dot \Lambda_{m+\gamma}} > 2N_0 \lambda \}|.
\end{align*}
We split the last term above into two parts.
By (v) and Chebyshev's inequality,
\begin{align*}
&|\{ t \in (0,T) : \|\cG f_{\lambda,2}(t,\cdot)\|_{ \dot \Lambda_{m+\gamma}} > 2 N_0 \lambda \}| \\
&\leq \sum_j |Q^\ast_j| 
+|\{ t \in (0,T) \cap \left( \cup_j Q^\ast_j\right)^c : \|\cG f_{\lambda,2}(t,\cdot)\|_{\dot \Lambda_{m+\gamma}} > 2N_0 \lambda \}| \\
&\leq 3 \lambda^{-1}\int_0^T\|f(t,\cdot)\|_{\Lambda_m}dt +
N \lambda^{-1} \int_{ (0,T) \cap \left( \cup_j Q_j^\ast\right)^c}  \|\cG f_{\lambda,2}(t,\cdot)\|_{\dot \Lambda_{m+\gamma}} dt,
\end{align*}
where $ \left( \cup_j Q_j^\ast \right)^c = \fR \setminus  \cup_j Q_j^\ast$.
It only remains to show that
$$
 \int_{ (0,T) \cap \left( \cup_j Q_j^\ast\right)^c}  \|\cG f_{\lambda,2}(t,\cdot)\|_{ \dot \Lambda_{m+\gamma}} dt
\leq N\int_0^T\|f(t,\cdot)\|_{ \dot \Lambda_m}dt.
$$
We put $Q_j=[t_j, t_j+\delta_j)$. 
Then $Q_j^\ast = [t_j-\delta_j, t_j+2\delta_j)$. 
Obviously, if $t \in (-\infty,  t_j - \delta_j)$ and $s \in Q_j$, then $1_{s<t} p(s,t,x) = 0$. 
Moreover, for any $t \in (0,T)$,
\begin{align*}
&\cG f_{\lambda,2}(t,x) \\
&= \int_0^t p(s,t,\cdot) \ast f_{\lambda,2}(s,\cdot)(x) ds  \\
&= \sum_{j} \int_{\fR} \left(1_{0<s<t<T} p(s,t,\cdot) -1_{t_j<t}p(t_j,t,\cdot)\right)  \\
& \qquad \qquad \qquad \qquad \qquad \qquad \qquad \ast 1_{Q_j}(s) \left( f_T(s,\cdot) -  \frac{1}{|Q_j|}\int_{Q_j} f_T(r,\cdot)dr\right) (x) ds
\end{align*}
since
\begin{align*}
\int_{\fR} 1_{Q_j}(s) \left( f_T(s,y) -  \frac{1}{|Q_j|}\int_{Q_j} f_T(r,y)dr\right)  ds =0 \qquad \forall y \in \fR^d.
\end{align*}
Therefore by Lemma \ref{l 1017 3}, 
\begin{align*}
&\int_{ (0,T) \cap \left( \cup_j Q_j^\ast\right)^c}  \|\cG f_{\lambda,2}(t,\cdot)\|_{\dot \Lambda_{m+\gamma}} dt \\
&\leq \sum_j \int^T_{(t_j+2\delta_j) \wedge T}  \|\cG f_{\lambda,2}(t,\cdot)\|_{\dot \Lambda_{m+\gamma}} dt \\
&\leq \sum_j \int_{(t_j+2\delta_j) \wedge T}^T \int_{ Q_j} \Bigg\|\left(p(s,t,\cdot) - p(t_j,t,\cdot) \right)  \\
&\qquad \qquad \qquad \qquad \qquad \qquad \qquad 
\ast\left( f_T(s,\cdot) -  \frac{1}{|Q_j|}\int_{Q_j} f_T(r,\cdot)dr\right)\Bigg\|_{\dot \Lambda_{m+\gamma}} ds dt\\
&\leq N\sum_j \int_{(t_j+2\delta_j) \wedge T}^T \bigg(\int_{ Q_j} \delta_j (t-s)^{-2}  \|f_T(s,\cdot)\|_{\dot \Lambda_m} ds \\
&\qquad \qquad \qquad \qquad \qquad \qquad+ \int_{ Q_j} \delta_j (t-s)^{-2}  ds \frac{1}{|Q_j|}\int_{Q_j} \|f_T(r,\cdot)\|dr \bigg)dt \\
&\leq N \int^T_{0}\|f(s,\cdot)\|_{\dot \Lambda_m} ds.
\end{align*}
The lemma is proved. 
\end{proof}
\begin{corollary}
						\label{c 1023 1}
Let $T>0$ and $p \in (1,\infty)$. 
Then there exist positive constant $N_1$ and $N_2$ so that
\begin{align}
					\label{e 17 0712 1}
\|\cG f\|_{L_p((0,T) ; \dot \Lambda_{\gamma+m})} \leq N_1 \|f\|_{L_p((0,T) ; \dot \Lambda_{m})} 
 \quad \forall f \in L_1\left((0,T); \Lambda_m\right) \cap L_\infty\left((0,T); \dot \Lambda_m\right),
\end{align}
and
\begin{align}
					\label{e 17 0712 2}
\|\cG f\|_{L_p((0,T) ; \Lambda_{\gamma+m})} 
\leq N_2 \|f\|_{L_p((0,T) ;   \Lambda_{m})} 
 \quad \forall f \in L_1\left((0,T); \Lambda_m\right) \cap L_\infty\left((0,T);  \Lambda_m\right),
\end{align}
where $N_1=N_1(d,p,\gamma, \nu,m)$ and $N_2=N_2(d,p,\gamma, \nu,m,T)$.
\begin{proof}
(\ref{e 17 0712 1}) is an easy application of the Marcinkiewicz interpolation theorem with Theorem \ref{t 1018 1} and Lemma \ref{l w11}.
(\ref{e 17 0712 2}) comes from  (\ref{l infty bounded}) and (\ref{e 17 0712 1}) 
\end{proof}
\end{corollary}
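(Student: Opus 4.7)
The plan is to obtain (\ref{e 17 0712 1}) by Marcinkiewicz interpolation between the two endpoint estimates already established, and then to deduce (\ref{e 17 0712 2}) by combining (\ref{e 17 0712 1}) with an elementary $L_p((0,T);L_\infty)$ bound for $\cG$ that follows from Corollary \ref{cor 1014 1}.

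For (\ref{e 17 0712 1}), I would work at the level of the scalar function $t \mapsto \|\cG f(t,\cdot)\|_{\dot\Lambda_{\gamma+m}}$. Theorem \ref{t 1018 1} supplies the strong $(\infty,\infty)$-type endpoint
\begin{equation*}
\|\cG f\|_{L_\infty((0,T);\dot\Lambda_{\gamma+m})} \leq N \|f\|_{L_\infty((0,T);\dot\Lambda_m)},
\end{equation*}
while Lemma \ref{l w11} supplies the weak $(1,1)$-type endpoint
\begin{equation*}
\lambda \, \bigl|\{t \in (0,T) : \|\cG f(t,\cdot)\|_{\dot\Lambda_{\gamma+m}} > \lambda\}\bigr| \leq N \|f\|_{L_1((0,T);\dot\Lambda_m)}.
\end{equation*}
Since $\cG$ is linear and $\|\cdot\|_{\dot\Lambda_{\gamma+m}}$ satisfies the triangle inequality, the map $f \mapsto \|\cG f(\cdot,\cdot)\|_{\dot\Lambda_{\gamma+m}}$ is sublinear as an operator into scalar functions on $(0,T)$. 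The classical Marcinkiewicz interpolation theorem (see e.g.~\cite{grafakos2008classical}) then produces the strong $(p,p)$-type bound (\ref{e 17 0712 1}) for each $p \in (1,\infty)$, with a constant depending only on $d$, $p$, $\gamma$, $\nu$, $m$ (and not on $T$, since both endpoint constants in the homogeneous setting are themselves $T$-independent).

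For (\ref{e 17 0712 2}), I would use the decomposition $\|g\|_{\Lambda_{\gamma+m}} = \|g\|_{L_\infty(\fR^d)} + \|g\|_{\dot\Lambda_{\gamma+m}}$. The homogeneous piece is directly controlled by (\ref{e 17 0712 1}) because $\|f(t,\cdot)\|_{\dot\Lambda_m} \leq \|f(t,\cdot)\|_{\Lambda_m}$. For the $L_\infty$-piece, Minkowski's integral inequality together with Corollary \ref{cor 1014 1} applied to $p(s,t,\cdot)$ yields, for each $t \in (0,T)$,
\begin{equation*}
\|\cG f(t,\cdot)\|_{L_\infty(\fR^d)} \leq \int_0^t \|p(s,t,\cdot)\|_{L_1(\fR^d)} \|f(s,\cdot)\|_{L_\infty(\fR^d)} \, ds \leq N \int_0^t \|f(s,\cdot)\|_{L_\infty(\fR^d)} \, ds,
\end{equation*}
and H\"older's inequality in $s$ then gives $\|\cG f\|_{L_p((0,T);L_\infty)} \leq N T \|f\|_{L_p((0,T);L_\infty)}$. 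Adding the two pieces produces (\ref{e 17 0712 2}) with the advertised $T$-dependence of the constant. The main point to watch in the whole argument is the legitimacy of Marcinkiewicz interpolation in the (a priori vector-valued) setting; this is painless because composition with the scalar (semi)norm $\|\cdot\|_{\dot\Lambda_{\gamma+m}}$ reduces the question to a sublinear operator acting on scalar functions of $t$ alone, and the hypothesis $f \in L_1 \cap L_\infty$ is precisely the class on which Theorem \ref{t 1018 1} and Lemma \ref{l w11} were both proved, so no density argument is required.
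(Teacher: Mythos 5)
Your proposal is correct and follows essentially the same route as the paper: Marcinkiewicz interpolation between the strong $(\infty,\infty)$ endpoint of Theorem \ref{t 1018 1} and the weak $(1,1)$ endpoint of Lemma \ref{l w11} for (\ref{e 17 0712 1}), then adding the $L_\infty$-bound (\ref{l infty bounded}) to upgrade to the inhomogeneous estimate (\ref{e 17 0712 2}). Your extra remarks — reducing to the scalar sublinear map $t\mapsto\|\cG f(t,\cdot)\|_{\dot\Lambda_{\gamma+m}}$, noting that the hypothesis $f\in L_1\cap L_\infty$ removes any density issue, and tracking that $N_1$ is $T$-independent while $N_2$ is not — are exactly the details the paper leaves implicit.
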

The following lemma will be used to show the existence of a solution to equation (\ref{main eqn}).
\begin{lemma}
					\label{l 1023 1}
For $\phi \in C_c^\infty\left((0,T) \times \fR^d\right)$, 
denote
$$
f_{\phi }(t,x) =  -\phi_t(t,x)  -\psi^\ast (t,i\Delta)\phi(t,x).
$$
Then for all $(s,y) \in (0,T) \times \fR^d$,
\begin{align*}
\int_0^T \int_{\fR^d} 1_{s<t}  p(s,t,x-y) (f_{\phi}(s,x)) dx dt =\phi(s,y).
\end{align*}
\end{lemma}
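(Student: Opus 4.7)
The plan is to reduce the integral identity to an exact-differential identity on the Fourier side. Reading the right-hand side as $f_\phi(t,x)$ (so that the integrand is consistent with the integration variables $t,x$), the claim is
\begin{equation*}
\int_s^T \int_{\fR^d} p(s,t,x-y)\,f_\phi(t,x)\,dx\,dt = \phi(s,y),\qquad (s,y)\in(0,T)\times\fR^d.
\end{equation*}

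First I would take the Fourier transform in $y$. Using $p(s,t,\cdot) = \cF^{-1}[e^{\int_s^t \psi(r,\cdot)dr}]$ together with the substitution $z=x-y$, a direct computation gives
\begin{equation*}
\int_{\fR^d} p(s,t,x-y)\,f_\phi(t,x)\,dx = \cF^{-1}_{\xi\to y}\!\Bigl[e^{\int_s^t \psi(r,-\xi)\,dr}\,\widehat{f_\phi(t,\cdot)}(\xi)\Bigr](y),
\end{equation*}
the sign flip $\xi\mapsto-\xi$ coming from the reflection $x\mapsto x-y$ inside $p$. From (\ref{ad fr}) one has $\widehat{\psi^\ast(t,i\nabla)\phi(t,\cdot)}(\xi) = \psi(t,-\xi)\hat\phi(t,\xi)$, and therefore
\begin{equation*}
\widehat{f_\phi(t,\cdot)}(\xi) = -\partial_t\hat\phi(t,\xi) - \psi(t,-\xi)\hat\phi(t,\xi).
\end{equation*}

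The key observation is the exact-differential identity
\begin{equation*}
e^{\int_s^t \psi(r,-\xi)\,dr}\,\widehat{f_\phi(t,\cdot)}(\xi) = -\frac{\partial}{\partial t}\Bigl[e^{\int_s^t \psi(r,-\xi)\,dr}\,\hat\phi(t,\xi)\Bigr],
\end{equation*}
which at each fixed $\xi$ holds for a.e.\ $t$ because $t\mapsto \int_s^t\psi(r,-\xi)dr$ is absolutely continuous with $a.e.$ derivative $\psi(t,-\xi)$ and $\hat\phi(\cdot,\xi)\in C^\infty_c(0,T)$. Integrating this identity over $t\in(s,T)$ and invoking the fundamental theorem of calculus together with $\hat\phi(T,\xi)=0$ (which follows from the compact support of $\phi$ in $(0,T)\times\fR^d$) yields
\begin{equation*}
\int_s^T e^{\int_s^t \psi(r,-\xi)\,dr}\,\widehat{f_\phi(t,\cdot)}(\xi)\,dt = \hat\phi(s,\xi).
\end{equation*}

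To conclude I would interchange the $t$-integration with the inverse Fourier transform in $\xi$ and invert. This is justified by the bound $|e^{\int_s^t \psi(r,-\xi)\,dr}|\le e^{-\nu(t-s)|\xi|^\gamma}$ from (\ref{sym1}) combined with the Schwartz decay of $\hat\phi(t,\xi)$ and $\psi(t,-\xi)\hat\phi(t,\xi)$ in $\xi$, which makes the integrand absolutely integrable on $[s,T]\times\fR^d$. The main technical issue I anticipate is justifying the pointwise-in-$\xi$ differentiation and the Fubini swap under the merely measurable-in-$t$ hypothesis on $\psi$; but once $\xi$ is fixed this reduces to a standard absolute-continuity argument in one variable, so I do not expect further difficulty.
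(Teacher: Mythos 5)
Your proof is correct and follows essentially the same route as the paper: both pass to the Fourier transform in the spatial variable (with the same $\xi\mapsto-\xi$ flip coming from (\ref{ad fr}) and the reflected convolution), recognize the integrand in $t$ as the exact derivative $-\partial_t\bigl[e^{\int_s^t\psi(r,-\xi)dr}\hat\phi(t,\xi)\bigr]$, and conclude via the fundamental theorem of calculus and $\hat\phi(T,\xi)=0$. The only difference is that you additionally spell out the absolute-continuity and Fubini justifications, which the paper leaves implicit.
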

\begin{proof}
It suffices to show that
\begin{align}
						\label{sol re 2}
\cF\left[\int_s^T \int_{\fR^d}   p(s,t,x) (f_{\phi\ast}(s,x+\cdot)) dt dx \right](\xi)
=\cF[\phi(s,\cdot)](\xi).
\end{align}
Denote $\hat \phi(t,\xi) := \cF[ \phi(t,\cdot)](\xi)$ and recall the property of the Fourier transform that
$$
\cF\left[\int_{\fR^d} f(x) g(x+\cdot) dx \right](\xi)
= \cF[f](-\xi) g(\xi).
$$
Thus by (\ref{ad fr})  and the integration by parts, 
the left hand side of (\ref{sol re 2}) is equal to
\begin{align*}
&\int_s^T \exp\left(\int_s^t\psi(r,-\xi)dr \right) 
\left(-\frac{\partial \hat \phi}{\partial t}(t,\xi) - \psi (t,-\xi) \hat \phi(t,\xi)\right) dt \\
&= \hat \phi(s,\xi)
+\int_s^T \psi(t,-\xi)\exp\left(\int_s^t \psi (r,-\xi)dr \right)  \hat \phi(t,\xi)  dt\\ 
&\quad -\int_{t}^T\exp\left(\int_s^t\psi(r,-\xi)dr \right) \psi(t,-\xi) \hat \phi(t,\xi) dt
= \hat \phi(s,\xi).
\end{align*}
The lemma is proved. 
\end{proof}

\vspace{3mm}
{\bf The proof of Theorem \ref{main thm}}
\vspace{2mm}

{\bf Part I.} (Uniqueness)
\vspace{2mm}

Let $u \in L_p((0,T); \Lambda_{m+\gamma})$ be a solution to the equation
\begin{align*}
&\frac{\partial u}{\partial t}(t,x)=\psi(t,i\nabla)u(t,x),\quad (t,x) \in (0,T) \times \fR^d \\
& u(0,x)=0,
\end{align*}
Then by the definition of the solution,
for any $\phi \in C^\infty_c\left((0,T) \times \fR^d\right)$
\begin{align*}
\int_0^T \int_{\fR^d} u(t,x) \left( -\phi_t(t,x) - \psi^\ast (t,i\nabla)\phi(t,x)\right)dtdx=0.
\end{align*}
Since $u$ is a locally integrable function on $(0,T) \times \fR^d$, it suffices to show that
\begin{align}
						\label{e 1019 4}
\{\left( -\phi_t(t,x) -  \psi^\ast (t,i\nabla)\phi(t,x)\right) : \phi(t,x) \in C_c\left( (0,T) \times \fR^d  \right) \}
\end{align}
is dense in $L_p((0,T) \times \fR^d)$.  
However, thie is an application of an $L_p$-theory, which is proved in author's previous paper (\cite{kim2016q}).
Note that  in \cite{kim2016q}, we considered the equation:
\begin{align*}
&\frac{\partial u}{\partial t}(t,x)= \psi^\ast(t,i\nabla)u(t,x)-\lambda u (t,x)+g(t,x),\quad (t,x) \in (0,\infty) \times \fR^d,
~\lambda > 0 .
\end{align*}
However following \cite[Section 2.5]{Krylov2008}, we can obtain the unique solvability of the Cauchy problem (\ref{main eqn}) and considering the change of variable $t \rightarrow T-t$, we can also have the unique solvability of the terminal value problem.
Thus for any $g \in L_p((0,T) \times \fR^d)$, there exists a unique solution 
$v \in L_p((0,T); H_p^\gamma(\fR^d)) \cap H^1_p((0,T); L_p(\fR^d))$
to the equation 
\begin{align*}
&-\frac{\partial v}{\partial t}(t,x)=\psi^\ast(t,i\nabla)v(t,x)+g(t,x),\quad (t,x) \in (0,T) \times \fR^d \\
& v(T,x)=0,
\end{align*}
and 
$$
\|v_t\|_{L_p((0,T);H_p^\gamma(\fR^d))}+\|v\|_{L_p((0,T);H_p^\gamma(\fR^d))} \leq N \|g\|_{L_p((0,T) \times \fR^d)},
$$
where $N$ is independent of $g$, $H_p^\gamma(\fR^d)$ is the fractional Sobolev space with the order $\gamma$ and the exponent $p$, and $H^1_p((0,T); L_p(\fR^d))$ is the $L_p(\fR^d)$-valued Sobolev space with  the order $1$ and the exponent $p$. 
Since the terminal value is zero
and the operator $  \phi \to -\phi_t- \psi^\ast (t,i\nabla)\phi $ is a continuous operator from 
$L_p((0,T); H_p^\gamma(\fR^d)) \cap H^1_p((0,T); L_p(\fR^d))$ 
to $L_p((0,T) \times \fR^d)$, 
we can find a sequence of $v_n(t,x) \in C^\infty_c ((0,T) \times \fR^d)$ such that
$$
\| v - v_n \|_{H^1_p((0,T);H_p^\gamma(\fR^d))} \to 0
$$
and 
$$
\| g_n  - g\|_{L_p((0,T \times \fR^d))} \to 0
$$
as $n \to \infty$, 
where $g_n = -\frac{\partial v_n}{\partial t}(t,x)-\psi^\ast (t,i\nabla)v_n(t,x)$.
Therefore the set in (\ref{e 1019 4}) is dense in $L_p((0,T) \times \fR^d)$ and the uniqueness is proved.

\vspace{2mm}
{\bf Part II.} (Existence)
\vspace{2mm}

For a $f \in L_p \left((0,T) ; \Lambda_{m}\right)$, we claim that
$$
u(t,x):= \cG f(t,x) :=  \int_0^t p(s,t,\cdot) \ast f(s,\cdot)(x) ds
$$
is a solution to equation (\ref{main eqn}).
By Definition \ref{sol}, it is sufficient to show that for any $\phi \in C^\infty_c\left((0,T) \times \fR^d\right)$
\begin{align*}
\int_0^T \int_{\fR^d} u(t,x) \left( -\phi_t(t,x) - \psi^\ast (t,i\nabla)\phi(t,x)\right)dtdx
=\int_0^T \int_{\fR^d} f(t,x) \phi(t,x) dtdx.
\end{align*}
Recall the notation
$$
f_{ \phi }(t,x) :=  -\phi_t(t,x)  - \psi^\ast (t,i\Delta)\phi(t,x).
$$
By Fubini's theorem and Lemma \ref{l 1023 1},
\begin{align*}
&\int_0^T \int_{\fR^d} u(t,x) \left( -\phi_t(t,x) - \psi^\ast(t,i\nabla)\phi(t,x)\right)dtdx \\
&=\int_0^T \int_{\fR^d} \int_s^T  \int_{\fR^d} 1_{s<t} p(s,t,x-y)  f_{\phi}(t,x)dt dx f(s,y) dy ds \\
&=\int_0^T \int_{\fR^d} \phi(s,y) f(s,y) dy ds.
\end{align*}

\vspace{2mm}
{\bf Part III.} ($L_p\left((0,T); \Lambda_{m+\gamma}\right)$-estimate)
\vspace{2mm}

Due to Part I and II, $\cG f(t,x)$ is the unique solution to (\ref{main eqn}).
Therefore (\ref{main est}) holds due to  Corollary \ref{c 1023 1} if
$ f \in L_1\left((0,T); \Lambda_m\right) \cap L_\infty\left((0,T); \Lambda_m\right)$.
For general $f  \in L_p\left((0,T); \Lambda_m\right)$, it suffices to find an approximation 
$f_n  \in L_1\left((0,T); \Lambda_m\right) \cap L_\infty\left((0,T); \Lambda_m\right)$
so that
$$
\|f_n-f\|_{L_p\left((0,T); \Lambda_m\right)} \to 0
$$
as $n \to \infty$. 
However, one can easily find this approximation by mollifying and cutting off $f$ with respect to $t$. 
The theorem is proved. \qed 

\vspace{3mm}
{\bf The proof of Theorem \ref{main thm 2}}
\vspace{2mm}
											
Since the existence and uniqueness of a solution $u$ to equation (\ref{main eqn}) is already proved, it suffices to show
(\ref{main est 2}).
Recall that $u(t,x):=\cG f(t,x)$ is the solution to equation (\ref{main eqn}). 
Due to (\ref{e 1023 2}),
\begin{align}
						\label{e 1023 4}
\| u\|_{L_p\left((0,T); C^{n+\alpha}\right)}
\leq N\|f\|_{L_p\left((0,T); C^{n+\alpha} \right)}.
\end{align}

Note that $\Lambda_{\alpha} = C^{\alpha}$. 
Thus due to (\ref{main est}), for any multi-index $|\beta|=n$,

\begin{align}
						\label{e 1023 5}
\| D^{\beta} u\|_{L_p\left((0,T); \Lambda_{\gamma+\alpha}\right)}
\leq N\|D^{\beta}f\|_{L_p\left((0,T); C^\alpha \right)}.
\end{align}
Moreover, due to \cite[Corollary 6.3.10]{grafakos2009modern}, for any multi-index $\alpha_1 = \lfloor \gamma +\alpha \rfloor$, 
\begin{align}
						\notag
\| D^{\beta+\alpha_1} u\|_{L_p\left((0,T); C_{\gamma+\alpha-\lfloor \gamma +\alpha \rfloor}\right)} 
&=\| D^{\beta+\alpha_1} u\|_{L_p\left((0,T); \Lambda_{\gamma+\alpha-\lfloor \gamma +\alpha \rfloor}\right)} \\
						\label{e 1023 6}
&\leq N\| D^{\beta} u\|_{L_p\left((0,T); \Lambda_{\gamma+\alpha}\right)}
\end{align}
since $\gamma + \alpha \notin \bZ_+$,
Finally, combining (\ref{e 1023 4})-(\ref{e 1023 6}), we obtain (\ref{main est 2}).
The theorem is proved. \qed.


\begin{thebibliography}{10}

\bibitem{abels2012pseudodifferential}
H.~Abels.
\newblock {\em Pseudodifferential and singular integral operators: an
  introduction with applications}.
\newblock Walter de Gruyter, 2012.

\bibitem{dong2011partial}
H.~Dong and S.~Kim.
\newblock Partial schauder estimates for second-order elliptic and parabolic
  equations.
\newblock {\em Calculus of Variations and Partial Differential Equations},
  40(3-4):481--500, 2011.

\bibitem{dong2015partial}
H.~Dong and S.~Kim.
\newblock Partial schauder estimates for second-order elliptic and parabolic
  equations: a revisit.
\newblock {\em arXiv preprint arXiv:1502.00886}, 2015.

\bibitem{grafakos2008classical}
L.~Grafakos.
\newblock {\em Classical {Fourier} analysis}, volume 249.
\newblock Springer, 2008.

\bibitem{grafakos2009modern}
L.~Grafakos.
\newblock {\em Modern Fourier analysis}, volume 250.
\newblock Springer, 2009.

\bibitem{hormander2007analysis}
L.~H{\"o}rmander.
\newblock {\em The analysis of linear partial differential operators III:
  Pseudo-differential operators}, volume 274.
\newblock Springer Science \& Business Media, 2007.

\bibitem{jacob2002pseudo}
N.~Jacob.
\newblock {\em {pseudo-differential Operators $\&$ Markov Processes: Generators
  and Their Potential Theory}}, volume~2.
\newblock Imperial College Press, 2002.

\bibitem{Kim2014BMOpseudo}
I.~Kim, K.-H. Kim, and S.~Lim.
\newblock Parabolic {BMO} estimates for pseudo-differential operators of
  arbitrary order.
\newblock {\em Journal of Mathematical Analysis and Applications},
  427(2):557--580, 2015.

\bibitem{kim2016q}
I.~Kim, S.~Lim, and K.-H. Kim.
\newblock {An $L_q(L_p)$-Theory for Parabolic Pseudo-Differential Equations:
  Calder{\'o}n-Zygmund Approach}.
\newblock {\em Potential Analysis}, pages 1--21, 2016.

\bibitem{Krylov2002}
N.~V. Krylov.
\newblock The {Calder{\'o}n-Zygmund} theorem and parabolic equations in {$ L_p
  (\mathbb{R}, C^{2+\alpha}) $}-spaces.
\newblock {\em Annali della Scuola Normale Superiore di Pisa-Classe di
  Scienze}, 1(4):799--820, 2002.

\bibitem{Krylov2008}
N.~V. Krylov.
\newblock {\em Lectures on Elliptic and Parabolic Equations in Sobolev Spaces},
  volume~96.
\newblock American Mathematical Society Providence, RI, 2008.

\bibitem{lin2010pseudo}
Y.~Lin and S.~Z. Lu.
\newblock {Pseudo-differential operators on Sobolev and Lipschitz spaces}.
\newblock {\em Acta Mathematica Sinica, English Series}, 26(1):131--142, 2010.

\bibitem{lorenzi2000optimal}
L.~Lorenzi.
\newblock {Optimal Schauder estimates for parabolic problems with data
  measurable with respect to time}.
\newblock {\em SIAM Journal on Mathematical Analysis}, 32(3):588--615, 2000.

\bibitem{Mikulevivcius1992}
R.~Mikulevi{\v{c}}ius and H.~Pragarauskas.
\newblock On the {Cauchy} problem for certain integro-differential operators in
  {Sobolev and H{\"o}lder} spaces.
\newblock {\em Lithuanian Mathematical Journal}, 32(2):238--264, 1992.

\bibitem{mikulevicius2014cauchy}
R.~Mikulevicius and H.~Pragarauskas.
\newblock On the cauchy problem for integro-differential operators in
  h{\"o}lder classes and the uniqueness of the martingale problem.
\newblock {\em Potential Analysis}, 40(4):539--563, 2014.

\bibitem{Stein1993}
E.~M. Stein and T.~S. Murphy.
\newblock {\em Harmonic analysis: real-variable methods, orthogonality, and
  oscillatory integrals}, volume~3.
\newblock Princeton University Press, 1993.

\end{thebibliography}
\end{document}